\newtheorem{theorem}{Theorem}
\newtheorem*{theoremp}{Theorem}
\newtheorem{lemma}[theorem]{Lemma}
\newtheorem{corollary}[theorem]{Corollary}
\newtheorem*{problem}{Problem}
\newcommand{\R}{\mathbb{R}}
\newcommand{\C}{\mathcal{C}}
\newcommand{\Z}{\mathbb{Z}}
\newcommand{\lip}{\text{Lip}}
\title{An extension of a theorem of Yao \& Yao}
\author{Edgardo Rold\'an-Pensado \\ e.roldan@math.ucl.ac.uk \\ \and Pablo Sober\'on \\pablo.soberon@ciencias.unam.mx \and ------------------------------------------------------------ \\Mathematics Department \\ University College London \\ London WC1E 6BT
\\ ------------------------------------------------------------}
\begin{document}
\maketitle

\abstract{In this paper we study $N_d(k)$, the smallest positive integer such that any nice measure $\mu$ in $\R^d$ can be partitioned into $N_d(k)$ convex parts of equal measure so that every hyperplane avoids at least $k$ of them. A theorem of Yao and Yao \cite{YY1985} states that $N_d(1) \le 2^d$. Among other results, we obtain the bounds $N_d(2)\le 3\cdot 2^{d-1}$ and $N_d(1)\ge C\cdot 2^{d/2}$ for some constant $C$. We then apply these results to a problem on the separation of points and hyperplanes.

\section{Introduction}

A convex partition of $\R^d$ into $n$ parts is a covering $\mathcal{P}=\{C_1,\dots,C_n\}$ of $\R^d$ consisting of closed convex bodies with pairwise disjoint interiors. We say that a hyperplane $H\subset\R^d$ avoids a set $C$ if it does not intersect its interior.

The classical Yao-Yao theorem \cite{YY1985} states the following.
\begin{theoremp}[Yao and Yao, 1985]
Let $\mu$ be a nice measure in $\R^d$, then there is a convex partition $\mathcal P$ of $\R^d$ into $2^d$ parts of equal $\mu$-measure such that every hyperplane in $\R^d$ avoids at least one element of $\mathcal P$.
\end{theoremp}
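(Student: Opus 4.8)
The plan is to build the partition recursively and verify it by induction on $d$, strengthening the statement as needed so the recursion can close. The base case $d=1$ is trivial: split $\R$ at a median of $\mu$; the two resulting rays have equal mass, and every point of $\R$ (the ``hyperplanes'' of $\R^1$) lies off the interior of one of them. The case $d=2$ already illustrates the mechanism: one wants a point $p$ and two lines through $p$ cutting $\mu$ into four equal quarters; such a configuration exists by a continuity argument, the four quarters are convex of mass $\tfrac14$, and any line misses the interior of at least one of them.

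For general $d$ the construction extends this. Choose coordinates $\R^d=\R^{d-1}\times\R$, write $\pi(y,s)=y$ for the projection onto $\R^{d-1}$, and apply the $(d-1)$-dimensional statement to obtain a convex avoidance-partition $\{\tilde C_1,\dots,\tilde C_{2^{d-1}}\}$ of $\R^{d-1}$ equipartitioning the pushforward $\pi_*\mu$. Extrude each $\tilde C_j$ to the convex ``column'' $P_j=\tilde C_j\times\R$, and cut $P_j$ by the horizontal hyperplane $\{s=t_j\}$ bisecting $\mu|_{P_j}$; this yields $2^d$ convex parts of mass $2^{-d}$, a lower part $A_j=\tilde C_j\times(-\infty,t_j]$ and an upper part $B_j=\tilde C_j\times[t_j,\infty)$. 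The decisive observation is that the avoidance property is inherited from dimension $d-1$ \emph{as soon as all the $t_j$ equal a common value} $t$: a horizontal hyperplane $\{s=c\}$ then misses the interior of every $A_j$ when $c\ge t$ and of every $B_j$ when $c\le t$; and a non-horizontal hyperplane $G$, written as a graph $\{s=\ell(y)\}$ for an affine $\ell\colon\R^{d-1}\to\R$, misses $A_j$ exactly when $\tilde C_j$ lies in the closed halfspace $\{\ell\ge t\}$ and misses $B_j$ exactly when $\tilde C_j\subseteq\{\ell\le t\}$, so $G$ misses one of the $2^d$ parts precisely when the hyperplane $\{\ell=t\}\subset\R^{d-1}$ misses one of the $\tilde C_j$ --- which is the $(d-1)$-dimensional avoidance property (the case of constant $\ell$ reducing to the horizontal one).

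Thus the whole problem reduces to arranging $t_1=\dots=t_{2^{d-1}}$. Concretely, one must choose the coordinate direction $s$, a value $t$, and the partition $\{\tilde C_j\}$ so that the single hyperplane $\{s=t\}$ bisects $\mu|_{P_j}$ for every $j$; unwinding, this says $\{s=t\}$ must bisect $\mu$ and $\{\tilde C_j\}$ must simultaneously equipartition the two $(d-1)$-dimensional measures $\pi_*(\mu|_{\{s>t\}})$ and $\pi_*(\mu|_{\{s<t\}})$. This is what forces the inductive hypothesis to be strengthened --- from a statement about one measure to one about \emph{pairs} of measures coupled in the way the two projected halves of a bisected measure are --- and finding the right such pair-statement is the step I expect to be the main obstacle. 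It must still specialize to the theorem, be strong enough to drive the recursion, and be set up so that the obstruction to realizing it, viewed as a function of the free parameter (roughly a direction in $S^{d-1}$, the choice of $s$, after $t$ is fixed by the bisection requirement), is equivariant under the involution swapping the two sides of $\{s=t\}$ and takes values in a space small enough for a Borsuk--Ulam argument to force a zero; keeping that obstruction space low-dimensional is the delicate point. Niceness of $\mu$ is used throughout to guarantee uniqueness of medians and hence continuity of the whole construction in the parameters. Once compatible choices are in hand, convexity and equal measure hold by construction and avoidance follows from the prism argument above.
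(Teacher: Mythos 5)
Your base cases, the extrusion of a $(d-1)$-dimensional avoidance-partition into columns, and the observation that a common cut height $t$ transfers the avoidance property up one dimension are all essentially correct (modulo the vertical hyperplanes $G'\times\R$, which are non-horizontal but are not graphs $\{s=\ell(y)\}$; these are handled directly by the $(d-1)$-dimensional property). The gap is exactly the step you flag as ``the main obstacle'': your reduction demands a single convex avoidance-partition $\{\tilde C_j\}$ of $\R^{d-1}$ into $2^{d-1}$ parts that \emph{simultaneously} equipartitions the two projected half-measures $\pi_*(\mu|_{\{s>t\}})$ and $\pi_*(\mu|_{\{s<t\}})$. That is $2^{d-1}-1$ independent scalar constraints, while the parameters available --- the direction $s\in S^{d-1}$ (with $t$ then forced to be the median) together with the frame in $O(d-1)$ that indexes the $(d-1)$-dimensional Yao--Yao partitions --- number only $\binom{d}{2}$. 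Already for $d=4$ one has $7>6$, and the deficit grows exponentially, so no Borsuk--Ulam argument on this configuration space can produce the required coincidence: the obstruction space cannot be made ``low-dimensional'' in your sense. As set up, the pair-statement you would need is at least as hard as the theorem itself, and in this simultaneous-equipartition form it is almost certainly false.

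The actual proof (sketched in Section~\ref{seccion:yaoyao}) avoids this in two ways, and both changes are essential. First, the two half-spaces $H^\pm$ are projected onto the bisecting hyperplane $H$ not orthogonally but \emph{obliquely}, along a unit vector $v$ transversal to $H$; this $v$ is the free matching parameter your scheme lacks. Second, one does not ask the two projected measures $\mu_v^+$ and $\mu_v^-$ to admit a common \emph{partition}, only a common \emph{centre} $c$ --- a single point of $H$, defined recursively and unique for each frame. Matching one point of $H\cong\R^{d-1}$ is $d-1$ constraints against the $d-1$ degrees of freedom in $v$, so the count closes at every level of the recursion. The partitions induced on $H^+$ and $H^-$ are in general different, and avoidance is recovered not from your columns-over-a-common-base picture but from a stronger inductive property of the centre: a hyperplane meeting the line $c+\R v$ inside $H^+$ projects its intersection with $H^-$ into an open halfspace of $H$ missing $c$, and some piece of the $H^-$ partition avoids every such halfspace. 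Unless you replace your pair-statement by this centre-matching formulation (or something with comparably few constraints per level), the induction cannot be closed.
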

In the original proof of this theorem the measure has to have a continuous density function bounded away from $0$. Later, in \cite{Leh2009}, this was weakened to the condition that the measure of every hyperplane be $0$.
In this paper, we ask $\mu$ to satisfy the original conditions, as in \cite{YY1985}.

This theorem gives a partition in which almost every hyperplane intersects exactly $2^d-1$ pieces.
The proof gives a unique partition for each ordered orthonormal basis $(u_1,\dots,u_d)$. We extend this theorem to the case when every hyperplane is required to avoid $2$ pieces.

\begin{theorem}\label{teo:y-y_avoids_2}
Let $\mu$ be a nice measure in $\R^d$, then there is a convex partition $\mathcal{P}$ of $\R^d$ into $3\cdot 2^{d-1}$ parts of equal $\mu$-measure such that every hyperplane in $\R^d$ avoids at least two elements of $\mathcal{P}$.
\end{theorem}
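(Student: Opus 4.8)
The plan is to prove the theorem by induction on $d$, imitating the Yao--Yao construction but turning one binary split into a ternary one; this single modification, already transparent in dimension one, is what produces both the factor $\tfrac32$ over the bound $2^{d}$ and the passage from avoiding one part to avoiding two. Concretely, I aim for the recursion $N_d(2)\le 3\,N_{d-1}(1)=3\cdot 2^{d-1}$. For $d=1$ one cuts $\R$ at the $\tfrac13$- and $\tfrac23$-quantiles of $\mu$ into three intervals of equal measure: a point lies in the interior of at most one of them, hence avoids the other two.

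For the inductive step fix the orthonormal basis $(u_1,\dots,u_d)$, put $h(x)=\langle x,u_d\rangle$, and choose $t_1<t_2$ so that the slabs $L_1=\{h\le t_1\}$, $L_2=\{t_1\le h\le t_2\}$, $L_3=\{h\ge t_2\}$ each carry a third of $\mu$; these exist and are unique, with $t_1<t_2$ strictly, because $\mu$ has a positive continuous density. Inside each slab $L_j$ put a Yao--Yao partition into $2^{d-1}$ equal parts built from $u_1,\dots,u_{d-1}$: project $\mu|_{L_j}$ orthogonally to $u_d^{\perp}\cong\R^{d-1}$ (the projected measure is again nice), apply the Yao--Yao theorem in dimension $d-1$ to obtain convex parts $D^{(j)}_{\varepsilon}$, $\varepsilon\in\{\pm1\}^{d-1}$, of $u_d^{\perp}$, and set $C^{(j)}_{\varepsilon}=\pi^{-1}(D^{(j)}_{\varepsilon})\cap L_j$ where $\pi$ is that projection. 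These $3\cdot 2^{d-1}$ sets are closed, convex, cover $\R^d$ with pairwise disjoint interiors, and each has $\mu$-measure $\mu(\R^d)/(3\cdot 2^{d-1})$.

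It remains to show every hyperplane $H$ avoids at least two parts. If $u_d$ is parallel to $H$, then $H=\pi^{-1}(\overline H)$ for a hyperplane $\overline H$ of $u_d^{\perp}$, which avoids one $D^{(j)}_{\varepsilon}$ in each layer by Yao--Yao, so $H$ avoids three parts. If $H$ is a level set of $h$ it meets the interior of at most one layer, hence avoids the $2^{d}$ parts lying in the other two. The remaining case is $H$ the graph of a non-constant affine $\varphi$ on $u_d^{\perp}$, which meets all three slabs; this is the crux. One checks that $H$ avoids $C^{(1)}_{\varepsilon}$ iff $D^{(1)}_{\varepsilon}\subseteq\{\varphi\ge t_1\}$, avoids $C^{(3)}_{\varepsilon}$ iff $D^{(3)}_{\varepsilon}\subseteq\{\varphi\le t_2\}$, and avoids $C^{(2)}_{\varepsilon}$ iff $D^{(2)}_{\varepsilon}\subseteq\{\varphi\le t_1\}$ or $D^{(2)}_{\varepsilon}\subseteq\{\varphi\ge t_2\}$; so two such regions, distributed among the three layers, lying on the correct side of $\{\varphi=t_1\}$ or $\{\varphi=t_2\}$, must be found. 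If the three layered partitions are chosen independently this can fail, because a sufficiently tilted hyperplane crosses every part; guaranteeing the two avoided parts for \emph{every} $\varphi$ forces the three partitions to be coordinated --- for instance so that the bounding hyperplanes used in the three layers pass through a common point, analogous to the Yao--Yao point --- and producing such a coordinated family is a fixed-point / degree statement. I expect this coordination, and with it the transverse-hyperplane case, to be the main obstacle; once it is in place the argument is a routine adaptation of Yao and Yao's, glued to the one-dimensional observation above.
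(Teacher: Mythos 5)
Your construction has a genuine gap, and you have correctly located it yourself: the case of a hyperplane transverse to the three slabs is exactly where the entire content of the theorem lies, and your argument stops at the point of saying that some coordination of the three layer partitions must exist via an unspecified fixed-point or degree statement. Nothing in the proposal identifies the parameter space, the group action, or the map whose zero would provide that coordination, so the inductive step is not established. Worse, the cylinder structure $C^{(j)}_{\varepsilon}=\pi^{-1}(D^{(j)}_{\varepsilon})\cap L_j$ discards the one piece of information that makes the transverse case tractable: the Yao--Yao theorem applied to the projected measure in $\R^{d-1}$ only tells you that the hyperplane $\{\varphi=t_j\}$ of $u_d^{\perp}$ misses \emph{some} $D^{(j)}_{\varepsilon}$, with no control over which side of it that part lies on, whereas you need a part on a prescribed side. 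The usable extra structure is the directional statement built into Yao and Yao's construction (a hyperplane meeting the axis through the centre, parallel to the projection vector, on one side of the halving hyperplane must avoid a part on the other side), and that structure lives in dimension $d$; it is lost when you pass to the projected measure.

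The paper's proof is organized quite differently. It also cuts $\R^d$ into three slabs $A,B,C$ of equal measure by hyperplanes $H_1,H_2$ orthogonal to $u_1$, but instead of three independent $(d-1)$-dimensional partitions it takes two overlapping $d$-dimensional Yao--Yao partitions, of $\mu|_{A\cup B}$ (first cut $H_1$, projection vector $v_1$ pointing into $B$) and of $\mu|_{B\cup C}$ (first cut $H_2$, projection vector $v_2$ pointing into $B$); the final partition consists of the $A$-parts of the first, the $C$-parts of the second, and the traces on $B$ of the first, giving $3\cdot 2^{d-1}$ pieces. The coordination you were missing is then a single alignment condition --- that $v_1$ and $v_2$ both be parallel to the line $O_1O_2$ joining the two centres --- and it is achieved by exhibiting a $\Z_2^{d-1}$-equivariant map on $SO(d)$ with values in $\R^{d-1}\times\cdots\times\R^1$ and showing it has a zero by a homotopy argument in the style of B\'ar\'any's proof of the Borsuk--Ulam theorem. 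Once aligned, the common axis $\ell=O_1O_2$ serves both partitions, and the directional Yao--Yao property yields two avoided parts for every hyperplane according to where it meets $\ell$. To salvage your induction you would at minimum have to replace plain Yao--Yao in the layers by this directional version and formulate precisely the equivariant zero problem that coordinates the layers; at that point you would essentially be reconstructing the paper's argument.
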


For $d=2$ this follows from a theorem by Buck and Buck \cite{BB1949} which states that $\R^2$ \textit{can be divided into six parts of equal measure by three concurrent lines}. Our method gives in this case a partition by three lines, two of which are parallel. The proof of theorem \ref{teo:y-y_avoids_2} is given in section \ref{seccion:teoremon}. Some details of Yao and Yao's orignal proof are necessary, so a sketch of their methods is given in section \ref{seccion:yaoyao}.

Let $N_d(k)$ be the smallest positive integer such that the following holds:
\textit{For every nice measure $\mu$ on $\R^d$ there exists a partition of $\R^d$ into $N_d(k)$ convex parts of equal measure such that every hyperplane avoids at least $k$ parts}. We call such a partition a $k$-equipartition.
Yao-Yao's theorem and theorem \ref{teo:y-y_avoids_2} are equivalent to the bounds $N_d(1)\leq 2^d$ and $N_d(2)\leq 3\cdot 2^{d-1}$.

There is another number which seems useful. Let $M_d(k,\alpha)$ be the smallest positive integer such that \textit{for every nice measure $\mu$ on $\R^d$ there is a family of $M_d(k,\alpha)$ convex sets such that every hyperplane avoids at least $k$ of them and they all have measure at least $\alpha$}.
Clearly we have
$$N_d(k)\geq M_d\left(k,\frac{1}{N_d(k)}\right).$$

\begin{lemma}\label{lem:inR2}
Let $p \le q$ be non-negative integers, then $M_2(q-p,\frac{p}{2q})\le 2q$.
\end{lemma}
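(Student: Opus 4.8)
The plan is to take the $2q$ sets to be unions of $p$ consecutive cells of an equipartition of $\R^2$ by $q$ concurrent lines. Normalise $\mu(\R^2)=1$ and assume $q\geq 1$. Two extreme cases are trivial: if $p=0$, take $2q$ empty sets (every hyperplane avoids all $2q$, and $0\geq 0$); if $p=q$, then $p/(2q)=\tfrac12$ while $q-p=0$, so any $2q$ closed half-planes of $\mu$-measure $\tfrac12$ work. So assume $1\leq p\leq q-1$, in particular $q\geq 2$.

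First I would choose a point $z$ and $q$ lines through $z$ cutting $\R^2$ into sectors $S_1,\dots,S_{2q}$, numbered cyclically, each of $\mu$-measure $\tfrac1{2q}$, and chosen so that every $p$ consecutive of them subtend total angle at most $\pi$ at $z$ (equivalently, every $2q-p$ consecutive subtend at least $\pi$). Writing $w_i=\{i,i+1,\dots,i+p-1\}$ with indices mod $2q$, put $T_i=\bigcup_{j\in w_i}S_j$ for $i=1,\dots,2q$. Each $T_i$ is a cone with apex $z$ and opening angle $\leq\pi$, hence convex, and $\mu(T_i)=p/(2q)$. So the measure requirement holds, and it remains to show that every line avoids at least $q-p$ of the $T_i$.

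If $\ell$ passes through $z$, with direction $v$, then $\ell$ meets $\operatorname{int}T_i$ exactly when $v$ or $-v$ lies in the open angular arc spanned by $T_i$. When $v$ is not the direction of one of the $2q$ bounding rays, $v$ lies in the interior of some sector $S_a$ and $-v$ in that of the opposite sector $S_{a+q}$; since $p\leq q-1$, no window $w_i$ contains two sectors $q$ apart, so the $p$ indices with $S_a\in w_i$ and the $p$ indices with $S_{a+q}\in w_i$ are disjoint, $\ell$ meets $\operatorname{int}T_i$ for exactly those $2p$ indices, and it avoids $2q-2p\geq q-p$ of the $T_i$; if instead $\ell$ is one of the $q$ lines the count drops to $2(p-1)$ and $\ell$ avoids even more. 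If $\ell$ does not pass through $z$, the sectors whose interior meets $\ell$ form a cyclically contiguous block $B$, and the key estimate is $|B|\leq q+1$: indeed $\ell$ meets $\operatorname{int}S_j$ exactly when the angular interval of $S_j$ meets the open semicircle $D$ of directions in which rays from $z$ cross $\ell$, and since the $2q$ bounding rays come in $q$ antipodal pairs, $D$ contains the direction of at most one ray of each pair, hence of at most $q$ rays; those at most $q$ rays cut $D$ into at most $q+1$ subarcs, lying in at most $q+1$ consecutive sectors. Now $\ell$ meets $\operatorname{int}T_i$ iff $w_i\cap B\neq\emptyset$, which happens for exactly $|B|+p-1\leq q+p$ indices $i$; so $\ell$ avoids at least $2q-(q+p)=q-p$ of the sets — precisely the claimed bound.

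The one step that is not routine is the first: producing $z$ and the $q$ concurrent lines subject to the angle condition. That $\mu$ admits an equipartition into $2q$ equal sectors by $q$ concurrent lines through a suitable centre is a continuity/degree statement of the same kind as the Yao--Yao and Buck--Buck constructions recalled in section~\ref{seccion:yaoyao}; the extra requirement that no $p$ consecutive sectors be reflex must be built into that argument and genuinely depends on the choice of $z$ — for a badly placed centre even a single sector can be reflex (say, when $\operatorname{supp}\mu$ is seen inside a narrow cone from $z$). Should enforcing the angle condition on all windows at once prove delicate, a safety net is to replace any offending $T_i$ by the complementary cone $\bigcup_{j\notin w_i}S_j$, which is convex and has measure $1-\tfrac p{2q}\geq\tfrac p{2q}$; this salvages convexity at the price of a more involved count, so it is preferable to arrange the angle condition from the start.
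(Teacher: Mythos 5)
Your counting of which $T_i$ a given line can meet is fine, but the construction cannot get off the ground: the first step --- partitioning $\mu$ into $2q$ sectors of equal measure by $q$ concurrent lines --- is impossible in general once $q\ge 4$. A configuration of $q$ concurrent lines has only $q+2$ degrees of freedom (two for the apex $z$ and one angle per line), while requiring all $2q$ sectors to have measure $\frac{1}{2q}$ imposes $2q-1$ independent conditions; these balance exactly at $q=3$, which is precisely the Buck--Buck theorem the paper cites, and are overdetermined for every larger $q$, where generic measures admit no such equipartition. So this is not ``a continuity/degree statement of the same kind as the Yao--Yao and Buck--Buck constructions'': the target of the relevant test map has too high a dimension, and no topological argument can force a zero. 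The additional requirement that every $p$ consecutive sectors subtend an angle at most $\pi$ only adds constraints, and your ``complementary cone'' fallback does not help because it presupposes the same non-existent sector equipartition. Since the lemma is needed for all $q$ (Corollary \ref{coro:inR2} takes $q=k+1$), this is a fatal gap rather than a technicality.

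The paper sidesteps concurrency entirely. It builds a cyclic sequence of \emph{halving} lines $\ell_1,\dots,\ell_q$ (each new line costs only one degree of freedom, since it is pinned down by the halving condition plus one measure condition) and takes as its $2q$ convex sets the lens-shaped regions $A_k$ between consecutive pairs, each of measure $\frac{p}{2q}+t$; the single parameter $t\ge 0$ is tuned so that every point of $\R^2$ is covered at least $p$ times and the sequence closes up after $q$ steps. The sets are allowed to overlap --- which is permitted, since $M_2$ asks only for a family of convex sets of measure at least $\frac{p}{2q}$, not a partition --- and a generic line, crossing each of the $q$ lines once, meets exactly $p+q$ of the $2q$ lenses, hence avoids $q-p$. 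If you want to rescue the sector idea you must give up either the equality of the sector measures or the concurrency of the lines, and the latter route leads essentially back to the paper's construction.
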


From the proof of this lemma we also obtain a bound for $N_2$.

\begin{corollary}\label{coro:inR2}
$N_2(k)\leq 2k+2$.
\end{corollary}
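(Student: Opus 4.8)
The plan is to obtain the corollary from the construction used to prove Lemma~\ref{lem:inR2}, run at the extremal parameters $q=k+1$ and $p=1$. With these values the lemma yields, for every nice measure $\mu$ on $\R^2$, a family of $2q=2k+2$ convex sets, each of $\mu$-measure at least $\tfrac{p}{2q}=\tfrac{1}{2k+2}$, such that every line avoids at least $q-p=k$ of them. I claim that for $p=1$ these sets $C_1,\dots,C_{2k+2}$ have pairwise disjoint interiors, and hence form a convex partition of $\R^2$: interior-disjoint convex sets whose measures sum to at least $\mu(\R^2)=1$ can leave uncovered only a $\mu$-null set, which for a nice measure is Lebesgue-null, so there are no gaps. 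Granting this, $1=\sum_{i=1}^{2k+2}\mu(C_i)\ge (2k+2)\cdot\tfrac{1}{2k+2}=1$ forces $\mu(C_i)=\tfrac{1}{2k+2}$ for every $i$, so $\{C_1,\dots,C_{2k+2}\}$ is a convex equipartition of $\R^2$ into $2k+2$ parts every line avoids $k$ of --- a $k$-equipartition --- and therefore $N_2(k)\le 2k+2$.

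Thus the only point needing work beyond Lemma~\ref{lem:inR2} itself is checking that its construction, at $p=1$, delivers an honest convex partition rather than merely an $M_2$-witness. I expect the lemma's proof to build, for general $p$, the $2q$ required sets by grouping $p$ consecutive pieces of an underlying convex partition of $\R^2$ into $2q$ equal-measure pieces --- arranged so the groups are again convex, and so that the pieces avoided by any line form runs long enough to yield $q-p$ avoided groups --- and for $p=1$ no grouping occurs, leaving exactly that partition. So the substantive work lies inside the lemma: tuning the construction so the $2q$ base pieces equipartition $\R^2$ while controlling the run structure of the pieces each line avoids. I would expect that, rather than anything in the deduction above, to be the main obstacle.

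For orientation, the reason a line avoids many pieces in such constructions is the familiar dichotomy. The pieces are organised around some central configuration; given a line $H$, either $H$ meets that configuration, in which case $H$ runs along common boundaries and so misses the interiors of all but a few pieces --- far more than $k$ of them --- or $H$ misses it, in which case the configuration lies strictly inside one of the two open halfplanes bounded by $H$; then every piece contained in the complementary closed halfplane has interior disjoint from $H$ and is therefore avoided, and the combinatorics of the configuration place at least $q-p$ pieces there. Taking $q=k+1$ and $p=1$ gives the stated bound.
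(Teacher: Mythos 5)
Your top-level reduction is the same as the paper's: set $p=1$, $q=k+1$ in Lemma~\ref{lem:inR2} and observe that the resulting $2k+2$ convex sets in fact partition $\R^2$, whence their measures are forced to equal $\frac{1}{2k+2}$ and every line avoids $k$ of them. The bookkeeping you add (interior-disjoint closed convex sets with measures summing to $1$ tile $\R^2$ up to a $\mu$-null, hence empty, open set) is correct. The problem is how you discharge the one nontrivial claim, namely that at $p=1$ the construction yields pairwise interior-disjoint sets. You justify this by guessing that Lemma~\ref{lem:inR2} builds its family by grouping $p$ consecutive cells of an underlying convex equipartition of $\R^2$ into $2q$ pieces, so that at $p=1$ ``no grouping occurs.'' That is not the construction. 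The lemma takes a cyclic family of $q$ halving lines $\ell_1,\dots,\ell_q$, each obtained from the previous one by rotating until the two opposite wedges swept out have measure $\frac{p}{2q}+t$, with $t\ge 0$ chosen minimally so that every point of the plane lies in at least $p$ of the $2q$ wedges; for $p\ge 2$ these wedges genuinely overlap and are not a grouping of any partition. So the statement you actually need --- that for $p=1$ the critical $t$ is $0$ and the wedges are a.e.\ disjoint (equivalently, that the regions covered three or more times are $\mu$-null) --- is precisely the assertion the paper makes in its one-line proof, and your proposal supplies no argument for it; the rationale you offer does not apply to the construction in question.

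A second, smaller mismatch: your closing ``dichotomy'' (a line either meets a central configuration and runs along common boundaries, or misses it and avoids everything in the opposite halfplane) is not how the avoidance count works here. In the lemma's construction a generic line crosses each of the $q$ halving lines exactly once and for that reason meets exactly $p+q$ of the $2q$ wedges; with $p=1$ and $q=k+1$ it meets $k+2$ of the $2k+2$ parts and so avoids $k$. Neither branch of your dichotomy produces this count.
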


This strictly improves the bounds that can be obtained by using Yao-Yao's theorem, theorem \ref{teo:y-y_avoids_2} and the formulas of section \ref{seccion:otras} for all $k\leq 15$ except $k=1,2,7,12$ (see Table \ref{table}).

\begin{table}\label{table}
\centering
\begin{tabular}{|c|c||c|c||c|c||c|c|}
\hline
$k$ & $N_2(k)$ & $k$ & $N_2(k)$ & $k$ & $N_2(k)$ & $k$ & $N_2(k)$\\
\hline
$1$ & $4$ & $6$ & $14$ & $11$ & $24$ & $16$ & $32$\\
$2$ & $6$ & $7$ & $16$ & $12$ & $24$ & $17$ & $32$\\
$3$ & $8$ & $8$ & $18$ & $13$ & $28$ & $18$ & $36$\\
$4$ & $10$ & $9$ & $20$ & $14$ & $30$ & $19$ & $36$\\
$5$ & $12$ & $10$ & $22$ & $15$ & $32$ & $20$ & $36$\\
\hline
\end{tabular}
\caption{Bounds for $N_2$.}
\end{table}

For a fixed $d$, we can determine the asymptotic behaviour of $N_d(k)$.
\begin{theorem}\label{teo:limit}
$\displaystyle\lim_{k\rightarrow\infty}\frac{N_d(k)}{k}=1$.
\end{theorem}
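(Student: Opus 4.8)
We need to show $\lim_{k\to\infty} N_d(k)/k = 1$. The plan splits into a lower bound and an upper bound.

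The lower bound $N_d(k) \ge k$ (in fact $N_d(k) \ge k+1$) is essentially immediate: if we partition $\R^d$ into $N$ convex pieces of equal measure and every hyperplane avoids at least $k$ of them, then a generic hyperplane meets at most $N-k$ pieces; but we can certainly find a hyperplane meeting at least two pieces (any partition into $\ge 2$ parts has a hyperplane hitting two of them, e.g. a supporting hyperplane of a bounded piece translated slightly), so $N - k \ge 2$, giving $N_d(k) \ge k+2$. Hence $\liminf_{k\to\infty} N_d(k)/k \ge 1$.

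The substance is the upper bound $\limsup_{k\to\infty} N_d(k)/k \le 1$, i.e. for every $\varepsilon > 0$ we have $N_d(k) \le (1+\varepsilon)k$ for all large $k$. The idea is to combine Yao--Yao's theorem with a scaling/subdivision argument. Fix a nice measure $\mu$. Apply the Yao--Yao theorem to get a convex partition $\mathcal{P}_0 = \{C_1,\dots,C_{2^d}\}$ into $2^d$ equal-measure pieces such that every hyperplane avoids at least one $C_i$. Now subdivide: inside each $C_i$ we want to further partition it into many small convex equal-measure pieces in such a way that every hyperplane that actually enters $C_i$ still avoids a large fraction of the small pieces inside $C_i$. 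Concretely, one slices $C_i$ into $m$ convex ``slabs'' of equal measure by parallel hyperplanes; a generic hyperplane crossing $C_i$ avoids all but $O(1)$ of these slabs if the slicing direction is chosen adversarially — but since the hyperplane direction is not known in advance, one instead iterates a bracketing construction: partition $C_i$ recursively, or use a grid-like convex partition into $m$ cells with the property that any hyperplane meets at most $O(m^{1-1/d})$ of them. Taking $m$ large, the fraction of cells met goes to $0$. Combining over all $2^d$ pieces (using that a hyperplane already misses one whole $C_i$, hence misses all $\approx m$ cells inside it, plus misses all but an $o(1)$-fraction of cells in each of the other pieces), we get a partition into $N = 2^d m$ equal pieces in which every hyperplane avoids at least $m + (2^d-1)(m - o(m)) = N - o(N)$ of them; setting $k = N - o(N)$ yields $N_d(k) \le k + o(k)$, i.e. $N_d(k)/k \to 1$.

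The main obstacle is the subdivision step: producing, inside a given convex body with a given measure, a convex equipartition into $m$ parts such that every hyperplane misses $(1-o(1))m$ of them, with the $o(1)$ uniform over all hyperplanes and all nice measures. A clean way is again to invoke Yao--Yao recursively — apply the Yao--Yao partition to $\mu$ restricted to $C_i$, then again inside each of those $2^d$ sub-pieces, and so on for $r$ levels, obtaining $2^{dr}$ equal pieces. A hyperplane $H$ that meets $C_i$ avoids at least one of the $2^d$ first-level sub-pieces (by Yao--Yao), hence avoids all $2^{d(r-1)}$ of its descendants; among the remaining first-level sub-pieces it meets, the same argument recurses. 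An easy induction shows $H$ meets at most $a_r$ of the $2^{dr}$ pieces where $a_r$ satisfies $a_r \le (2^d-1)a_{r-1} + (\text{number met at top level})$; more carefully, the number of leaves met is at most $(2^d-1)^r$, so the fraction met is $((2^d-1)/2^d)^r \to 0$. One must check that the Yao--Yao construction can indeed be applied to the restriction of a nice measure to a convex body (after an affine normalization it is still nice), and assemble the uniform estimate. Handling the unbounded outer pieces $C_i$ of $\mathcal{P}_0$ needs a word — but a hyperplane already avoids at least one $C_i$ entirely, and for the pieces it does meet one applies the recursive subdivision to the (finite) measure there, so unboundedness is not an issue for the counting.
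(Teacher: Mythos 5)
Your main construction is the same as the paper's: iterate the Yao--Yao partition $r$ times to obtain $2^{dr}$ equal-measure pieces of which every hyperplane meets at most $(2^d-1)^r$; this is exactly the paper's inequality $N_d(k_1N_d(k_2)+k_2N_d(k_1)-k_1k_2)\leq N_d(k_1)N_d(k_2)$ iterated starting from $N_d(1)\le 2^d$, and your lower bound $\liminf_k N_d(k)/k\ge 1$ is also fine (the paper uses $N_d(k)\ge d+k$ via a hyperplane through one interior point of each of $d$ parts). However, there is a genuine gap at the very end. Your construction only establishes $N_d(k_r)\le k_r+o(k_r)$ along the sparse sequence $k_r=2^{dr}-(2^d-1)^r$, whose consecutive terms have ratio roughly $2^d$. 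The sentence ``setting $k=N-o(N)$ yields $N_d(k)/k\to1$'' silently upgrades this to all $k$; monotonicity of $N_d$ cannot do that, since for $k$ just above $k_r$ it only gives $N_d(k)\le N_d(k_{r+1})\approx 2^dk$. The paper closes this gap with the subadditivity $N_d(k_1+k_2)\le N_d(k_1)+N_d(k_2)$, proved by cutting the measure with a hyperplane in the ratio $N_d(k_1):N_d(k_2)$ and equipartitioning each side; writing an arbitrary $k$ as $qk_r+s$ with $0\le s<k_r$ then gives $N_d(k)\le qN_d(k_r)+N_d(s)\le(1+\varepsilon)k+C_r$ with $C_r$ a constant, which is what the limit over all $k$ requires. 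You need this step or an equivalent interpolation.

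A secondary remark: your parenthetical alternative --- a ``grid-like convex partition into $m$ cells such that any hyperplane meets at most $O(m^{1-1/d})$ of them'' --- is not justified for an arbitrary nice measure. In a hierarchical slicing equipartition the cuts in different slabs do not align, and already in the plane a steep line can cross nearly all of the roughly horizontal cuts inside every thin vertical strip, meeting $\Theta(m)$ cells. Since you ultimately discard this route in favour of the recursive Yao--Yao subdivision it does not affect the argument, but it should not be offered as an available alternative without proof.
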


This means that the condition of equipartitioning a given measure is not very strong in the sense that, if there are enough parts, it can be done so that every hyperplane avoids almost all of them.
This is the same behaviour as one would expect from a random partition.
These results are proven in section \ref{seccion:otras}.
In section \ref{seccion:ab} they are applied to an apparently simple but annoyingly resistant problem regarding separation of points an hyperplanes.

\begin{problem}
Determine all pairs $(\alpha, \beta) \in \R^2_+$ such that for any finite set $X$ of points in $\R^d$ and any finite set $Y$ of hyperplanes in $\R^d$, there are sets $A\subset X$ and $B\subset Y$ such that:
\begin{itemize}
\item $|A| \ge \alpha |X|$,
\item $|B| \ge \beta |Y|$,
\item no two points in $A$ are separated by a hyperplane in $B$.
\end{itemize}
\end{problem}

Trying to solve this question we obtained a bound for $N_d(1)$ in terms of $h_d(t)$, the measure of a spherical cap of $S^d$ with central angle $t$.  This measure is computed with the usual probability measure in $S^d$. 

\begin{theorem}\label{teo:yyinferior}
Let $A$ be a family of convex sets in $\R^d$ such that the following properties hold
\begin{itemize}
	\item Every set in $A$ has measure at most $\alpha$
	\item Every hyperplane avoid at least one set of $A$
	\item The sum of the measures of the sets in $A$ is not greater than $1$.
\end{itemize}
Then $\alpha$ has to be exponentially small in terms of $d$.  In other words, if $1 \ge \alpha \cdot M_d (1,{\alpha})$, then
$$ \frac{1}{\alpha} \geq \left[ h_d \left(\frac{\pi}{4}\right)\right]^{-1} \approx C\cdot 2^{\frac d 2}$$
for some constant $C>0$.
\end{theorem}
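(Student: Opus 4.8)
The plan is to convert the avoidance hypothesis into a covering problem on a sphere and then compare it with covering numbers of spherical caps.

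First I would record a Helly-type reformulation. A hyperplane with unit normal $u$ misses the interior of a convex set $C$ precisely when the orthogonal projection of $C$ onto the line $\R u$ omits the level of the hyperplane. Hence every hyperplane avoids a member of $A$ if and only if, for every direction $u\in S^{d-1}$, the projections onto $\R u$ of the members of $A$ have empty common intersection; by the one-dimensional Helly theorem, this holds iff for each $u$ some pair of members of $A$ can be strictly separated by a hyperplane orthogonal to $u$. So the hypothesis is really about which directions arise as separation directions of pairs of sets.

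Next I would bring in $1\ge\alpha M_d(1,\alpha)$. Since $M_d$ is defined as a worst-case quantity over all nice measures, it suffices to pick one convenient $\mu$ and show that no family of at most $1/\alpha$ convex sets of $\mu$-measure $\ge\alpha$ can realize every separation direction unless $\alpha\le h_d(\pi/4)$. I would take $\mu$ to be a thin-shell approximation of the uniform probability measure on $S^{d-1}$ (a nice measure). Then $\mu(C)\ge\alpha$ forces the trace $S_C:=C\cap S^{d-1}$ to have normalized measure $\ge\alpha$, so $S_C$ lies inside no spherical cap of measure $<\alpha$, i.e. of angular radius $<\rho$, where $\rho$ is the radius of a cap of measure exactly $\alpha$. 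Now fix $u$ and a separated pair $C$ (below), $C'$ (above) at level $c$. Both caps $\{\langle x,u\rangle\le c\}$ and $\{\langle x,u\rangle\ge c\}$ must carry $\mu$-mass $\ge\alpha$, which pins $-\cos\rho\le c\le\cos\rho$, and hence $S_C$ is disjoint from the cap of radius $\rho$ about $u$ while $S_{C'}$ is disjoint from the cap of radius $\rho$ about $-u$. Letting $u$ vary: for every direction $v$ some trace misses the $\rho$-cap about $v$; equivalently the $\rho$-neighbourhoods of the traces have empty common intersection, so the complements $U_C:=S^{d-1}\setminus (S_C)^{+\rho}$ cover $S^{d-1}$ with at most $1/\alpha$ pieces.

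Finally one has to extract $\alpha\le h_d(\pi/4)$ from this covering, which is the delicate point. The quick estimate---spherical isoperimetry, giving that $(S_C)^{+\rho}$ has measure at least that of a cap of radius $2\rho$, followed by a union bound on the $U_C$---only yields $\alpha\le h_d(\pi/3)$: still exponentially small, but with the wrong base. To reach $\pi/4$ one must use both members of each separated pair at once, playing the set that handles the cap about $u$ against the set that handles the cap about $-u$: an antipodal (Borsuk-type) bookkeeping which, in the extremal case, is exactly the statement that a family of balanced spherical caps works precisely when their centres form an antipodally symmetric net of covering radius just below $\pi/2-\rho$, and no coarser family does. Pushing this through forces (cap of radius $\rho$) $\le$ (cap of radius $\pi/2-\rho$) in measure, i.e. $\rho\le\pi/4$, hence $\alpha\le h_d(\pi/4)$. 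I expect this last counting step to be the main obstacle; the Helly reformulation, the reduction via $M_d$, and the verifications (that the thin shell is genuinely a nice measure, that replacing $C$ by its trace loses nothing, and that $\alpha<1/2$ may be assumed) are comparatively soft.
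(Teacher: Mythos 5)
Your setup is reasonable and genuinely different in flavour from the paper's: the one-dimensional Helly reduction to separated pairs in every direction, the specialization to a rotationally invariant measure, and the resulting covering of the sphere by at most $1/\alpha$ sets are all sound (modulo the soft verifications you list, which are indeed soft). The paper instead lifts the family to $S^d$, averages the measure of $A^\perp$ over the family (Lemma \ref{lem:parejas} and Theorem \ref{teo:puntos}) and then applies the Figiel--Lindenstrauss--Milman isoperimetric inequality with $\varepsilon=\pi/2$ (Theorem \ref{teo:cotasup}). However, your proposal has a genuine gap precisely at the point you flag as ``the main obstacle'': the passage from the covering by complements of $\rho$-neighbourhoods to $\rho\le\pi/4$ is asserted, not argued. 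As you yourself compute, isoperimetry applied to $\rho$-neighbourhoods caps out at $\rho\le\pi/3$, and the ``antipodal bookkeeping'' invoked to do better is not an argument; as described (a claim about extremal antipodal nets of caps) it cannot become one without a further isoperimetric input, because the traces $S_C$ are arbitrary sets rather than caps, so any ``no coarser family works'' assertion must itself be reduced to the extremal (cap) case.

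The missing idea is to use the separation level $c$ itself, not only the fact that both half-spaces at level $c$ carry mass at least $\alpha$. Since $c\in[-\cos\rho,\cos\rho]$, either $c\le 0$ or $c\ge 0$, so \emph{one member of each separated pair lies in a closed hemisphere}: for every $u$ there is a $C$ with $S_C\subset\{x:\langle x,u\rangle\le 0\}$ or $S_C\subset\{x:\langle x,-u\rangle\le 0\}$. Hence the sets $P_C=\{v:\langle x,v\rangle\le 0\ \text{for all } x\in S_C\}$, together with their antipodes, cover the sphere. Each $P_C$ is the complement of the open $\tfrac{\pi}{2}$-neighbourhood of $S_C$, so the FLM inequality quoted in the paper (with $\varepsilon=\tfrac{\pi}{2}$) bounds its measure by $h_{d-1}\left(\tfrac{\pi}{2}-\rho\right)$; the union bound over at most $2/\alpha$ sets then gives $h_{d-1}(\rho)=\alpha\le 2\,h_{d-1}\left(\tfrac{\pi}{2}-\rho\right)$, which forces $\rho\le\tfrac{\pi}{4}$ up to the constant absorbed in the theorem's $C$. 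This $\tfrac{\pi}{2}$-neighbourhood step is exactly the content of the paper's Theorem \ref{teo:cotasup}, and without it (or an equivalent) your final step does not go through.
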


As a consequence we obtain,
\begin{corollary}
$$N_d(1) \ge C \cdot 2^{\frac{d}{2}}.$$
\end{corollary}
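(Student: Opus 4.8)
The plan is to deduce this directly from Theorem~\ref{teo:yyinferior}, so almost no new work is needed. First I would unwind the definitions: by the definition of $N_d(1)$, for every nice measure $\mu$ there is a convex partition of $\R^d$ into $n := N_d(1)$ parts, each of $\mu$-measure $1/n$, such that every hyperplane avoids at least one of them. Such a partition is in particular a family of $n$ convex sets, each of measure at least $1/n$, with every hyperplane avoiding at least one member; since this holds for every nice $\mu$, it certifies that $M_d\!\left(1,\tfrac{1}{n}\right)\le n$.

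Next I would set $\alpha := 1/n$ and feed this into Theorem~\ref{teo:yyinferior}. We have $\alpha\cdot M_d(1,\alpha)\le \alpha\cdot n = 1$, which is precisely the hypothesis $1\ge\alpha\cdot M_d(1,\alpha)$ of that theorem. It therefore yields $1/\alpha\ge\big[h_d(\pi/4)\big]^{-1}$, that is, $N_d(1)=n\ge\big[h_d(\pi/4)\big]^{-1}$. Invoking the asymptotic estimate $\big[h_d(\pi/4)\big]^{-1}\approx C\cdot 2^{d/2}$ already recorded in the statement of Theorem~\ref{teo:yyinferior} (a routine evaluation of the probability measure of a spherical cap of angular radius $\pi/4$ in $S^d$) gives $N_d(1)\ge C\cdot 2^{d/2}$ for a suitable constant $C>0$.

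There is essentially no obstacle here once Theorem~\ref{teo:yyinferior} is available: all the difficulty lies in that theorem, and the corollary is a one-line consequence. The only point to check carefully is that the definitions match up in the right direction — namely that a $1$-equipartition of an arbitrary nice measure into $n$ parts of measure $1/n$ legitimately certifies $M_d(1,1/n)\le n$ (the ``measure at least $\alpha$'' clause in the definition of $M_d$ is here satisfied with equality), so that the hypothesis of Theorem~\ref{teo:yyinferior} is met with $\alpha=1/n$ and the stated lower bound on $1/\alpha$ transfers verbatim to $N_d(1)$.
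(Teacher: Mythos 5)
Your proposal is correct and follows exactly the route the paper intends: it instantiates the inequality $N_d(k)\geq M_d\left(k,\frac{1}{N_d(k)}\right)$ stated in the introduction with $k=1$ and $\alpha=1/N_d(1)$, verifies the hypothesis $1\ge\alpha\cdot M_d(1,\alpha)$ of Theorem \ref{teo:yyinferior}, and reads off the bound. The paper gives no separate proof of the corollary precisely because this is the one-line deduction you describe, so there is nothing to add.
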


For the approximation of the cap measure see \cite{Ball97}, for example. Thus, an exponentially large number of parts is needed. This answers a question by B. Bukh on whether the number of pieces needed is indeed super-polynomial.
This kind of bound can be obtained for $N_d(k)$ for any $k$ in terms of spherical caps of $S^d$.
However, explicit approximations are hard to find.
These results are a consequence of theorems \ref{teo:cotasup} and \ref{teo:puntos} below.

\vspace{6pt}
\noindent{\it Remark.} The Yao-Yao theorem can be generalised in the following way: Given $a_1,a_2,\dots,a_{2^d}>0$ such that $\sum a_i=\mu(\R^d)$, there is a partition of $\R^d$ into $2^d$ convex parts $\{C_1,C_2,\ldots,C_{2^d}\}$ such that $\mu(C_i)=a_i$ for all $i$ and every hyperplane avoids the interior of at least one $C_i$. Theorem \ref{teo:y-y_avoids_2} can also be generalised in the same way.
This is made clear in the next sections.

\section{The $(\alpha,\beta)$ problem}\label{seccion:ab}

The $(\alpha, \beta)$ problem deals with how well behaved points and hyperplanes are with each other in terms of separation.
This problem was told to the authors by I. B\'ar\'any in its first version, mentioned in section $1$.
It has the difficulty that it is not self-dual,
so we work with a second version which does have this property.
Namely,
\begin{problem}[Second version]
Find all pairs $(\alpha, \beta) \in \R^2_+$ such that for any two nice probability measures $\mu_1$, $\mu_2$ in $S^d$ there are sets $A, B \subset S^d$ with $\mu_1 (A) \ge \alpha$, $\mu_2 (B) \ge \beta$ such that either
\begin{eqnarray}
a \cdot b \ge 0 & \text{for all} & a \in A,\, b \in B \label{eq:ab}\\
&\text{ or }&\nonumber\\
a \cdot b \le 0 & \text{for all} & a \in A,\,b \in B.\nonumber
\end{eqnarray}
\end{problem}

Here we may assume that the measures $\mu_1$ and $\mu_2$ are centrally symmetric. If they are not then we can consider the measures given by $\mu_1'(A)=\frac 12(\mu_1(A)+\mu_1(-A))$ and $\mu_2'(A)=\frac 12(\mu_2(A)+\mu_2(-A))$ and obtain the same pairs $(\alpha, \beta)$ for these measures.

Let $\C'_{d}$ be the set of pairs $(\alpha,\beta)$ that satisfy the conditions of the original problem and $\C_d$ be the set of pairs that satisfy the conditions of the second version of the problem.

Note that $(\alpha,\beta)\in\C'_{d}$ if and only if $(\frac\alpha 2,\frac\beta 2)\in\C_{d}$.
This is done simply by embedding $\R^d$ in $\R^{d+1}$ as a hyperplane not containing the origin.
Then every point $a\in\R^d$ corresponds to the pair of points $\{a',-a'\}$ in $S^d$ in the line $Oa$ and every hyperplane $H\in\R^d$ corresponds to the pair of points $\{b',-b'\}$ in $S^d$ such that $b_1b_2$ is orthogonal to every line $Ob$ with $b\in H$.

With this transformation we see that the original problem is essentially equivalent to one similar to the second version but with finite sets of points instead of probability measures.
The change to measures follows from the fact that every nice measure can be approximated by linear combinations of Dirac measures and vice versa.

The sets $A$ and $B$ in this problem can (and will) be taken as the intersection of $S^d$ and a convex cone in $\R^{d+1}$ with apex at the origin.

We shall denote by $M^d$ usual probability measure on $S^d$.
Given $0\leq t<\pi$ and $x\in S^d$, let $\C(d,x,t)$ be the spherical cap of $S^d$ with centre $x$ and central angle $t$, define $h_d(t)$ as its $M^d$-measure.

Given $A\subset S^d$, let $A^\perp$ be the set of points $x\in S^d$ such that there exists $a\in A$ with $a\cdot x=0$. Note that the largest set $B\subset S^d$ that satisfies \eqref{eq:ab} is one of the connected components of the complement of $A^{\perp}$.

With our notation, theorem 2.1 in \cite{FLM1977} states the following.
\begin{theoremp}
Let $A$ be a closed subset of $S^d$ and set $t>0$ so that $M^d(A)=h_d(t)$. Then for every $\varepsilon>0$, $\mu_d(A_\varepsilon)\geq h_d(t+\varepsilon)$,
where $A_\varepsilon$ is the set of points $x\in S^d$ for which there exists $a\in A$ with $\arccos(a\cdot x)<\varepsilon$.
\end{theoremp}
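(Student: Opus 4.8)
\emph{Sketch of proof.} This is the Lévy--Schmidt isoperimetric inequality on $S^d$; although the paper quotes it from \cite{FLM1977}, I record the standard argument. The plan is to use \emph{two-point symmetrization} (polarization) to turn an arbitrary closed set into a spherical cap while keeping its measure fixed and without enlarging any $\varepsilon$-neighbourhood; once $A$ has been replaced by the cap $C$ of the same measure, $C_\varepsilon=\C(d,x_0,t+\varepsilon)$ has measure exactly $h_d(t+\varepsilon)$ by definition of $h_d$, which is the desired bound.

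Set-up. For a great subsphere $H=u^{\perp}\cap S^d$ let $\sigma=\sigma_H$ be the orthogonal reflection in $H$ and $S^{+}$ the open hemisphere containing $u$. For closed $A\subset S^d$ put
\[ P_HA \;=\; \bigl((A\cup\sigma A)\cap\overline{S^{+}}\bigr)\;\cup\;\bigl((A\cap\sigma A)\cap S^{-}\bigr); \]
that is, whenever exactly one point of a mirror pair $\{x,\sigma x\}$ lies in $A$, it is moved to the hemisphere $\overline{S^{+}}$. Two facts are needed. (i) $P_H$ preserves $M^d$: it is assembled from pieces of $A$ and of $\sigma A$ and $\sigma$ is measure preserving, so $M^d(P_HA)=M^d(A)$. (ii) $P_H$ does not enlarge $\varepsilon$-neighbourhoods, in the strong form $(P_HA)_\varepsilon\subseteq P_H(A_\varepsilon)$, whence $M^d\bigl((P_HA)_\varepsilon\bigr)\le M^d\bigl(P_H(A_\varepsilon)\bigr)=M^d(A_\varepsilon)$. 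Fact (ii) is the geometric core; it follows by a short case analysis from the elementary inequality that if $x,y$ lie on the same closed side of $H$ then $d(x,y)\le d(x,\sigma y)$ (equivalently $(x\cdot u)(y\cdot u)\ge 0$).

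Passing to the cap. One selects great subspheres $H_1,H_2,\dots$ so that $A_n:=P_{H_n}\cdots P_{H_1}A$ converges, in the Hausdorff metric, to the spherical cap $C=\C(d,x_0,t)$ about a fixed point $x_0$: the space of closed subsets of $S^d$ is compact, a closed set fixed by enough polarizations aimed at $x_0$ must be such a cap, and $M^d(C)=M^d(A)=h_d(t)$ pins down the radius. By (i)--(ii), $M^d\bigl((A_n)_\varepsilon\bigr)\le M^d(A_\varepsilon)$ for all $n$. Conversely $A_n\to C$ in the Hausdorff metric gives $C\subseteq(A_n)_{\delta_n}$ with $\delta_n\to 0$, hence $C_{\varepsilon-\delta_n}\subseteq(A_n)_\varepsilon$ and
\[ M^d(A_\varepsilon)\;\ge\;M^d\bigl((A_n)_\varepsilon\bigr)\;\ge\;M^d\bigl(C_{\varepsilon-\delta_n}\bigr)\;=\;h_d(t+\varepsilon-\delta_n). \]
Letting $n\to\infty$ and using continuity of $h_d$ yields $M^d(A_\varepsilon)\ge h_d(t+\varepsilon)$. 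The range $t+\varepsilon\ge\pi$ is trivial, since then $C_\varepsilon$ is all of $S^d$ up to a null set.

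Where the work is. The two delicate points are (a) the neighbourhood inclusion $(P_HA)_\varepsilon\subseteq P_H(A_\varepsilon)$, which needs the spherical triangle inequality together with a careful check of the three configurations of a mirror pair relative to $A$; and (b) the existence of a polarization sequence driving an arbitrary closed set to the cap of the same measure --- a standard but nontrivial ``convergence to the symmetric rearrangement'' statement (in the spirit of Baernstein--Taylor and Brock--Solynin) that one either cites or proves by a greedy energy-decrement argument. The remaining ingredients --- measure invariance, lower semicontinuity of $A\mapsto M^d(A_\varepsilon)$ under Hausdorff convergence, and continuity of $h_d$ --- are routine. An alternative that sidesteps (b) is to derive the inequality from the spherical Brunn--Minkowski inequality, at the cost of importing a heavier tool.
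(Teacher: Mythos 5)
The paper does not prove this statement at all: it is imported verbatim as Theorem~2.1 of \cite{FLM1977} (the L\'evy--Schmidt spherical isoperimetric inequality) and used as a black box, so there is no in-paper argument to compare yours against. Your sketch is a correct outline of the standard two-point-symmetrization proof, which is a genuinely self-contained route the paper deliberately avoids. The two facts you isolate are the right ones, and both check out: measure invariance of $P_H$ is immediate, and the inclusion $(P_HA)_\varepsilon\subseteq P_H(A_\varepsilon)$ does follow from the case analysis you indicate via $x\cdot\sigma y=x\cdot y-2(x\cdot u)(y\cdot u)$. The Hausdorff-limit step is also essentially right, with one point you gloss over: measure is only upper semicontinuous under Hausdorff convergence, so $M^d(C)=h_d(t)$ does not follow from $M^d(A_n)=h_d(t)$; but upper semicontinuity already gives $M^d(C)\geq h_d(t)$, hence the cap has angular radius at least $t$, which is the only direction your final chain of inequalities needs. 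The genuine gap, which you flag yourself, is item (b): producing a polarization sequence whose Hausdorff limit is a cap centred at a prescribed point is a real theorem (Baernstein--Taylor, Brock--Solynin, Van Schaftingen) and is where most of the work of the isoperimetric inequality lives. As submitted, your text is an accurate roadmap with the hard step outsourced, which puts it on the same logical footing as the paper's bare citation --- acceptable if you cite the convergence result explicitly, incomplete if the intent was a self-contained proof.
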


If $\varepsilon=\frac{\pi}{2}$ and $A$ is connected, then $S^d\setminus A_\varepsilon$ is one of the two connected components of $S^d\setminus A^\perp$. Therefore, if $A,B\subset S^d$ satisfy \eqref{eq:ab} and $M^d(A)=h_d(t)$ for some $t>0$, then $M^d(B)\leq h_d(\frac{\pi}{2} - t)$. This is the following theorem.

\begin{theorem}\label{teo:cotasup}
All points in $\C_d$ lie on or below the curve $$\left\{\left(h_d(t),h_d\left(\frac{\pi}{2}-t\right)\right):0\leq t\leq \frac{\pi}{2}\right\}.$$
\end{theorem}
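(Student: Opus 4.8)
The plan is to test the universal condition defining $\C_d$ against the single pair $\mu_1=\mu_2=M^d$, and thereby reduce the theorem to the assertion already isolated in the paragraph preceding the statement: if $A,B\subseteq S^d$ satisfy one of the alternatives of \eqref{eq:ab} and $M^d(A)=h_d(t)$, then $M^d(B)\le h_d(\tfrac\pi2-t)$. Indeed, if $(\alpha,\beta)\in\C_d$ then, applying the definition of $\C_d$ to $\mu_1=\mu_2=M^d$, we obtain $A,B\subseteq S^d$ with $M^d(A)\ge\alpha$, $M^d(B)\ge\beta$ satisfying \eqref{eq:ab} in one of its two forms; once the displayed inequality is available it only remains to read off from it that $(\alpha,\beta)$ sits on or below the curve.

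I would carry this out in the following steps. First, since $M^d$ is invariant under the antipodal map, replacing $B$ by $-B$ converts the second alternative of \eqref{eq:ab} into the first without altering $M^d(B)$, so we may assume $a\cdot b\ge0$ for all $a\in A$, $b\in B$. Second, I normalise $A$: since $\alpha>0$ the set $A$ is not contained in any great subsphere (those being $M^d$-null), so the smallest closed convex cone $K$ containing $A$ has nonempty interior; I replace $A$ by $\widehat A:=K\cap S^d$, which is closed, connected (being the intersection of $S^d$ with a closed convex cone of nonempty interior), and contains $A$, so $M^d(\widehat A)\ge M^d(A)$. This is precisely the convention, stated earlier, that $A$ and $B$ may be taken to be cone sections. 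Third, I replace $B$ by the largest set compatible with $\widehat A$, namely $\widehat B:=K^{*}\cap S^d=\{x\in S^d:\;a\cdot x\ge0\text{ for all }a\in A\}$, which contains $B$. The point of the cone language is the clean identity
$$S^d\setminus\widehat A_{\pi/2}=\{x\in S^d:\;u\cdot x\le0\text{ for all }u\in K\}=-\widehat B,$$
in the notation $\widehat A_\varepsilon$ of the quoted theorem of \cite{FLM1977}; this is exactly the observation that, for connected $A$, $S^d\setminus A_{\pi/2}$ is one of the two connected components of $S^d\setminus A^{\perp}$, and it gives $M^d(\widehat B)=1-M^d(\widehat A_{\pi/2})$. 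Since $\widehat A$ is a proper cone section it lies in a closed hemisphere (if $K=\R^{d+1}$ then $\widehat B=\varnothing$ and $\beta=0$), so $M^d(\widehat A)\le\tfrac12$ and we may write $M^d(\widehat A)=h_d(t)$ with $0<t\le\tfrac\pi2$. Applying the spherical isoperimetric inequality from \cite{FLM1977} with $\varepsilon=\tfrac\pi2$ and using $h_d(s)+h_d(\pi-s)=1$ gives
$$\beta\;\le\;M^d(B)\;\le\;M^d(\widehat B)\;=\;1-M^d\bigl(\widehat A_{\pi/2}\bigr)\;\le\;1-h_d\bigl(t+\tfrac\pi2\bigr)\;=\;h_d\bigl(\tfrac\pi2-t\bigr),$$
together with $\alpha\le M^d(A)\le M^d(\widehat A)=h_d(t)$.

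It then remains to translate these two inequalities into the geometric statement. Because $h_d$ is continuous and strictly increasing on $[0,\tfrac\pi2]$ with $h_d(0)=0$, $h_d(\tfrac\pi2)=\tfrac12$, and $0<\alpha\le h_d(t)$, there is a unique $t'\in(0,t]$ with $h_d(t')=\alpha$; the point of the curve with parameter $t'$ is $\bigl(\alpha,h_d(\tfrac\pi2-t')\bigr)$, and since $t'\le t$ we have $h_d(\tfrac\pi2-t')\ge h_d(\tfrac\pi2-t)\ge\beta$. Thus the point of the curve lying directly above the abscissa $\alpha$ has ordinate at least $\beta$, i.e. $(\alpha,\beta)$ lies on or below the curve, which is the theorem.

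I expect the only genuinely delicate point to be the normalisation step: passing from the sets $A,B$ handed to us by the definition of $\C_d$ to a connected (indeed geodesically convex) $\widehat A$ and to the matching optimal $\widehat B$, and verifying the identity $S^d\setminus\widehat A_{\pi/2}=-\widehat B$, since it is this that legitimises the use of the isoperimetric inequality and encapsulates the two observations recorded before the statement. Everything after that is a direct application of the theorem of \cite{FLM1977} plus routine bookkeeping with the cap function $h_d$, and no geometry beyond the dual-cone reformulation of the sign condition in \eqref{eq:ab} enters.
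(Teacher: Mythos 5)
Your proposal is correct and follows essentially the same route as the paper: specialise the defining condition of $\C_d$ to $\mu_1=\mu_2=M^d$, pass to the conic normalisation of $A$ and the dual set $\widehat B$ (which realises the paper's remark that the largest admissible $B$ is a component of the complement of $A^{\perp}$, namely $S^d\setminus A_{\pi/2}$), and invoke the isoperimetric inequality of \cite{FLM1977} with $\varepsilon=\frac{\pi}{2}$. You merely make explicit the normalisation and the identity $S^d\setminus\widehat A_{\pi/2}=-\widehat B$, which the paper leaves implicit.
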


This turns out to be best possible if $d=1$.

\begin{theorem}\label{teo:S1}
$$\C_1=\left\{(\alpha,\beta):\alpha+\beta\leq\frac{1}{2}\right\}.$$
\end{theorem}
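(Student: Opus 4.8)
The plan is to prove the two inclusions separately; the first is immediate from the results already at hand, and the second is where the work lies. For $\C_1\subseteq\{\alpha+\beta\le\frac12\}$ I would simply specialise Theorem~\ref{teo:cotasup}: on the circle the cap $\C(1,x,t)$ is an arc of angular length $2t$, so $h_1(t)=t/\pi$ for $0\le t\le\pi$, and the curve appearing in that theorem is $\{(t/\pi,\,\tfrac12-t/\pi):0\le t\le\tfrac\pi2\}$, which is exactly the segment of the line $\alpha+\beta=\tfrac12$ lying in the first quadrant. Being on or below this segment means $\alpha+\beta\le\tfrac12$, so the inclusion is automatic.

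For the reverse inclusion, fix $\alpha,\beta\ge0$ with $\alpha+\beta\le\tfrac12$ and nice probability measures $\mu_1,\mu_2$ on $S^1$, which (Section~\ref{seccion:ab}) we may take centrally symmetric, and also, being nice, atomless. The idea is to take $A$ an arc and $B$ its dual arc, i.e.\ the largest set with $a\cdot b\ge0$ for all $a\in A$, $b\in B$, so that the first alternative of the statement holds automatically; a direct computation shows that this $B$ is the arc \emph{concentric} with $A$ whose angular length is $\pi$ minus that of $A$. So it suffices to find an arc $A$ with $\mu_1(A)\ge\alpha$ whose concentric partner $B$ (of complementary angular length) has $\mu_2(B)\ge\beta$.

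The next step is to pass to the quotient circle $q\colon S^1\to S^1/\{\pm1\}$. Central symmetry makes $\mu_1,\mu_2$ descend to probability measures $\bar\mu_1,\bar\mu_2$ there, and an arc of angular length $<\pi$ upstairs projects to an arc of the same length whose $\mu_i$-measure is half the $\bar\mu_i$-measure of its image (the preimage of that image is the arc together with its antipode, two disjoint sets of equal measure). Downstairs the pair $(A,B)$ becomes two concentric arcs whose lengths add up to the whole circumference; equivalently, if $J$ is the image of $A$ then the image of $B$ is the complement of $\sigma(J)$, where $\sigma$ is the antipodal involution. Writing $\lambda_1=\bar\mu_1$, $\lambda_2=\sigma_*\bar\mu_2$, $p=2\alpha$ and $r=1-2\beta$ — so that $0\le p\le r\le1$ is exactly the hypothesis $\alpha+\beta\le\tfrac12$ — the problem collapses to: \emph{given probability measures $\lambda_1,\lambda_2$ on a circle and $0\le p\le r\le1$, there is an arc $J$ with $\lambda_1(J)\ge p$ and $\lambda_2(J)\le r$}. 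I would prove this by averaging. Reparametrise the circle by a monotone continuous surjection carrying $\lambda_1$ to Lebesgue measure on $\R/\Z$ (atomlessness of $\lambda_1$ is used here), and consider the arcs $J_\theta=[\theta,\theta+p]$, each of $\lambda_1$-measure exactly $p$; since every point of $\R/\Z$ lies in $J_\theta$ for a set of $\theta$ of measure $p$, Fubini gives $\int_0^1\lambda_2(J_\theta)\,d\theta=p\le r$, hence $\lambda_2(J_\theta)\le r$ for some $\theta$. Pulling this arc back through $\sigma$ and $q$ produces the required $A$ and $B$.

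I do not expect any single step to be genuinely hard; the real labour will be the bookkeeping — keeping the chain ``dual arc $\leftrightarrow$ concentric arc of complementary length $\leftrightarrow$ $\sigma$-complement arc $\leftrightarrow$ a single arc serving both measures'' and the factors of $2$ consistent — together with the degenerate cases $\alpha=0$, $\beta=0$, $\alpha=\tfrac12$, or $A$ of angular length $0$ or $\pi$, in which $A$ or $B$ collapses to a point or a semicircle and has to be checked directly (each such case is a one-liner using $\mu_i(\text{semicircle})=\tfrac12$). The measure-theoretic technicalities — atomlessness, and a limiting argument reducing a general nice measure to one of full support should one not wish to assume it — are routine, and the averaging lemma itself is essentially one line.
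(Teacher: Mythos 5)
Your proposal is correct and takes essentially the same route as the paper: the inclusion $\C_1\subseteq\{\alpha+\beta\le\frac12\}$ is quoted from Theorem \ref{teo:cotasup}, and the reverse inclusion is obtained by averaging the $\mu_2$-measure of $A^\perp$ over a family of arcs $A$ of fixed $\mu_1$-measure to find one with $\mu_2(A^\perp)\le 1-2\beta$, whence the component $B$ of the complement has $\mu_2(B)\ge\beta$ by central symmetry. The paper runs this average as a discrete pigeonhole over a partition of $S^1$ into arcs of $\mu_1$-measure $\alpha$ (using that $A^\perp$ is the union of the two rotates of $A$ by $\pm\frac{\pi}{2}$), while you run it as a continuous Fubini average on the quotient circle; this is only a difference in bookkeeping.
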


The Yao-Yao type partition theorems can be used to find pairs in the $(\alpha, \beta)$ problem.  The following lemma is the main tool for this purpose.

\begin{lemma}\label{lem:parejas}
Let $0\leq \rho \leq 1$. Suppose that for any nice measure $\mu_1$ on $S_d$ there exists a family $F$ of subsets of $S^d$ and a probability measure $\mu_F$ on $F$ such that
\begin{itemize}
\item $\mu_1(A)\geq\alpha$ for all $A\in F$,
\item For every $b\in S^d$, the set $F_b=\{A\in F:A\cap \{b\}^{\perp} \neq\emptyset\}$ is $\mu_F$-measurable and $\mu_F(F_b)\leq\rho$.
\end{itemize}
Then $(\alpha,\frac{1-\rho}{2})\in\C_d$.
\end{lemma}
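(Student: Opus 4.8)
The plan is to fix two nice probability measures $\mu_1,\mu_2$ on $S^d$, apply the hypothesis to $\mu_1$ to obtain the family $F$ together with the probability measure $\mu_F$, and then run a short averaging argument over a point $b$ chosen according to $\mu_2$. For $A\in F$ put
\[ B_A^{+}=\{b\in S^d: a\cdot b>0\ \text{ for all }\ a\in A\},\qquad B_A^{-}=-B_A^{+}. \]
Since each $A\in F$ is connected (in all our applications its members are intersections of $S^d$ with convex cones), whenever $A\cap\{b\}^{\perp}=\emptyset$, i.e. $a\cdot b\neq 0$ for every $a\in A$, the continuous function $a\mapsto a\cdot b$ has constant sign on $A$, so $b$ lies in $B_A^{+}$ or in $B_A^{-}$. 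Thus $\{b\in S^d: A\notin F_b\}=B_A^{+}\cup B_A^{-}$, a disjoint union of the two (open, hence $\mu_2$-measurable) connected components of $S^d\setminus A^{\perp}$, and in particular $\mu_2\bigl(\{b: A\notin F_b\}\bigr)=\mu_2(B_A^{+})+\mu_2(B_A^{-})$.

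Next I would integrate the hypothesis $\mu_F(F\setminus F_b)\geq 1-\rho$, which holds for every $b$, against $\mu_2$ and exchange the order of integration --- the integrand $(A,b)\mapsto\mathbf 1[A\notin F_b]$ is nonnegative and jointly measurable (trivially so when $F$ is finite, which is the case in every application here) --- to obtain
\[ 1-\rho\ \leq\ \int_{S^d}\mu_F(F\setminus F_b)\,d\mu_2(b)\ =\ \int_{F}\bigl(\mu_2(B_A^{+})+\mu_2(B_A^{-})\bigr)\,d\mu_F(A). \]
Since $\mu_F$ is a probability measure there is some $A_0\in F$ with $\mu_2(B_{A_0}^{+})+\mu_2(B_{A_0}^{-})\geq 1-\rho$, and hence $\max\{\mu_2(B_{A_0}^{+}),\mu_2(B_{A_0}^{-})\}\geq\frac{1-\rho}{2}$. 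Taking $A=A_0$ and letting $B$ be whichever of $B_{A_0}^{+}$, $B_{A_0}^{-}$ has the larger $\mu_2$-measure gives $\mu_1(A)\geq\alpha$, $\mu_2(B)\geq\frac{1-\rho}{2}$, and $a\cdot b>0$ (respectively $a\cdot b<0$) for all $a\in A$, $b\in B$; this puts $(A,B)$ in one of the two cases defining $\C_d$, namely \eqref{eq:ab} or its mirror. Therefore $(\alpha,\tfrac{1-\rho}{2})\in\C_d$. Alternatively, after first replacing $\mu_2$ by its symmetrization (as allowed above) one has $\mu_2(B_A^{-})=\mu_2(B_A^{+})$, so the displayed inequality already reads $\int_F\mu_2(B_A^{+})\,d\mu_F(A)\geq\tfrac{1-\rho}{2}$ and the case distinction disappears.

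The only genuinely delicate point I anticipate is the measurability bookkeeping: one must know that $B_A^{\pm}$ is $\mu_2$-measurable and that $(A,b)\mapsto\mathbf 1[A\notin F_b]$ is measurable on $F\times S^d$, so that Tonelli's theorem applies. For finite $F$ --- the situation in all our applications to Yao-Yao type partitions --- this is automatic, and in general it follows from the convexity of the members of $F$ and the hypothesis that each $F_b$ is $\mu_F$-measurable; I would deal with it briefly and move on. Conceptually the argument is nothing but the interchange of integrals together with the splitting of $\{b: A\notin F_b\}$ into $B_A^{+}$ and $B_A^{-}$; the factor $\tfrac12$ in the conclusion is precisely the price of not knowing in advance which side of the great subsphere $A^{\perp}$ captures the larger share of $\mu_2$.
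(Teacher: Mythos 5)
Your argument is correct and is essentially the paper's own proof: both integrate the hypothesis over $b\sim\mu_2$, swap the order of integration via Fubini--Tonelli to find a single $A_0\in F$ with $\mu_2(A_0^\perp)\leq\rho$, and then take for $B$ the larger of the two components of $S^d\setminus A_0^\perp$, which accounts for the factor $\tfrac12$. Your phrasing in terms of $F\setminus F_b$ and the sets $B_A^{\pm}$ is just the complementary form of the paper's computation with $\chi(A^\perp)$, and your explicit attention to connectedness and measurability only makes implicit steps visible.
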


Using this with $N_d (k)$ and $M_d (k,\alpha)$, we obtain the following.

\begin{theorem}\label{teo:puntos}
For any two positive integers $k$ and $d$,
$$\left(\frac{1}{2N_d(k)},\frac{k}{2N_d(k)}\right)\in\C_d.$$
More generally, if $\alpha >0$,
$$\left(\frac{\alpha}{2},\frac{k}{2M_d(k,\alpha)}\right) \in \C_d.$$
\end{theorem}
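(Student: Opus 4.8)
The plan is to push a Yao--Yao-type family from $\R^d$ onto a hemisphere of $S^d$ and hand it to Lemma~\ref{lem:parejas}. I will prove the second (more general) inequality; the first then follows by taking $\alpha=1/N_d(k)$, since $M_d(k,1/N_d(k))\le N_d(k)$ and $\C_d$ is closed downwards, or equally well by rerunning the argument below with a $k$-equipartition into $N_d(k)$ convex pieces in place of the $M_d$-family. Also, by the reduction to centrally symmetric measures recorded in Section~\ref{seccion:ab} it is enough to treat centrally symmetric $\mu_1$: for a general nice $\mu_1$ one builds the family for the symmetrization $\tfrac12(\mu_1+r_*\mu_1)$, $r$ the antipodal map, and then replaces each member $A$ by whichever of $A$, $-A$ carries the larger $\mu_1$-measure, which changes no $F_b$ because $\{b\}^\perp=-\{b\}^\perp$.

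So fix $\alpha>0$ and a centrally symmetric nice probability measure $\mu_1$ on $S^d$. Choose any $u\in S^d$ and set $H_u=\{x\in S^d:x\cdot u>0\}$, so $\mu_1(H_u)=\tfrac12$. Let $\pi_u(x)=x/(x\cdot u)$ be the gnomonic projection of $H_u$ onto the affine hyperplane $\{y\in\R^{d+1}:y\cdot u=1\}$, identified with $\R^d$; it is a diffeomorphism, it pushes $\mu_1|_{H_u}$ to a measure $\nu$ on $\R^d$ of total mass $\tfrac12$, and for $b\neq\pm u$ it sends the half great-subsphere $\{b\}^\perp\cap H_u$ to an affine hyperplane $\ell_b\subset\R^d$ (indeed $\pi_u^{-1}(\ell)=\{c\}^\perp\cap H_u$ for a suitable $c$). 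Applying the definition of $M_d(k,\alpha)$ to the nice probability measure $\nu':=2\nu$ produces convex sets $D_1,\dots,D_m\subset\R^d$, with $m=M_d(k,\alpha)$, such that $\nu'(D_j)\ge\alpha$ for all $j$ and every hyperplane of $\R^d$ avoids at least $k$ of them. I then take $A_j:=\pi_u^{-1}(\operatorname{int}D_j)\subset H_u$, let $F=\{A_1,\dots,A_m\}$, and let $\mu_F$ be the uniform probability measure on $F$.

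It remains to check the two hypotheses of Lemma~\ref{lem:parejas}. For the first, since $\nu$ is absolutely continuous and $\partial D_j$ is Lebesgue-null, $\mu_1(A_j)=\nu(\operatorname{int}D_j)=\nu(D_j)=\tfrac12\nu'(D_j)\ge\alpha/2$. For the second, fix $b\in S^d$. If $b=\pm u$ then $\{b\}^\perp$ is the equator, disjoint from every $A_j\subset H_u$, so $F_b=\emptyset$. If $b\neq\pm u$, then because $A_j\subset H_u$ we have $A_j\cap\{b\}^\perp=\pi_u^{-1}(\operatorname{int}D_j\cap\ell_b)$, so $A_j\in F_b$ exactly when $\ell_b$ fails to avoid $D_j$; since $\ell_b$ is a hyperplane it avoids at least $k$ of the $D_j$, whence $|F_b|\le m-k$ and $\mu_F(F_b)\le 1-k/m$ (and $F_b$ is measurable, $F$ being finite). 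Lemma~\ref{lem:parejas} with $\rho=1-k/m$ then yields $\bigl(\tfrac{\alpha}{2},\tfrac{1-\rho}{2}\bigr)=\bigl(\tfrac{\alpha}{2},\tfrac{k}{2M_d(k,\alpha)}\bigr)\in\C_d$.

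The step I expect to be delicate is the claim that $\nu'$ is nice in the sense required by the $M_d$-statement (or by Yao--Yao in the partition version). The gnomonic map is a diffeomorphism, so it carries a continuous positive density to a continuous positive density and it carries hyperplanes of $\R^d$ into great subspheres of $S^d$, which are $\mu_1$-null; hence in the Lehec formulation, where only ``every hyperplane is null'' is asked of the measure, $\nu'$ qualifies at once. Under the strict Yao--Yao hypotheses $\nu'$ can fail to have density bounded away from $0$ near infinity, and the fix is a routine approximation: apply the partition theorem to truncated or slightly perturbed measures supported away from the equator and pass to the limit. Everything else should be a mechanical translation through the gnomonic chart.
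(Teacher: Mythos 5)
Your proposal is correct and follows essentially the same route as the paper: radially (gnomonically) project the sphere onto $\R^d$, apply the definition of $M_d(k,\alpha)$ to the pushed-forward measure, pull the convex sets back to $S^d$, and feed the resulting finite family with the uniform measure into Lemma~\ref{lem:parejas} with $\rho = 1-k/M_d(k,\alpha)$. The only cosmetic difference is that the paper works with both hemispheres and a family of $2M_d(k,\alpha)$ antipodal copies, while you work on a single hemisphere with $M_d(k,\alpha)$ sets; the resulting value of $\rho$ is the same, and your extra care with the centrally symmetric reduction, the equator case $b=\pm u$, and the niceness of the projected measure only makes explicit what the paper leaves implicit.
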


With this theorem and theorem \ref{teo:cotasup} we obtain the lower bounds in theorem \ref{teo:yyinferior}. It should be noted that this also implies lower bounds for $N_d(k)$ for any $k$.

Applying the results obtained for $N_d(k)$, we can show the following.

\begin{corollary}\label{coro:parejas}
For any two non-negative integers $k_1$ and $k_2$, not both equal to $0$, we have
$$\frac{1}{2}\left(
\left[\frac{1}{2^d}\right]^{k_1} \left[\frac{1}{3\cdot 2^{d-1}}\right]^{k_2},
1 - \left[1- \frac{1}{2^d} \right]^{k_1} \left[ 1-\frac{1}{3 \cdot 2^{d-2}} \right]^{k_2}
\right) \in \C_d.$$
\end{corollary}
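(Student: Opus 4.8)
The plan is to route everything through Theorem~\ref{teo:puntos}, after bounding $M_d$ (equivalently $N_d$) by iterating the two equipartition theorems. First I would isolate the combination step: if for every nice measure on $\R^d$ there is a $k_1$-equipartition into $n_1$ convex parts and a $k_2$-equipartition into $n_2$ convex parts, then there is a partition of $\R^d$ into $n_1 n_2$ convex parts of equal measure in which every hyperplane avoids at least $n_1 n_2 - (n_1-k_1)(n_2-k_2)$ of them. To see this, take a $k_1$-equipartition $\{C_1,\dots,C_{n_1}\}$ of the given $\mu$ and refine each $C_i$ by applying the $k_2$-equipartition to $\mu|_{C_i}$ (restriction of a nice measure to a convex set — again nice; see the last paragraph). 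A hyperplane $H$ misses $C_i$ unless it meets its interior, and in that case the $k_2$-equipartition of $\mu|_{C_i}$ forces $H$ to avoid at least $k_2$ of the $n_2$ sub-cells; since $H$ meets at most $n_1-k_1$ of the $C_i$, it meets at most $(n_1-k_1)(n_2-k_2)$ sub-cells in all.

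Next I would iterate this, using the Yao--Yao theorem ($n=2^d$, $k=1$) $k_1$ times and Theorem~\ref{teo:y-y_avoids_2} ($n=3\cdot 2^{d-1}$, $k=2$) $k_2$ times. This produces a partition of $\R^d$ into $m=(2^d)^{k_1}(3\cdot 2^{d-1})^{k_2}$ convex parts of equal measure in which every hyperplane avoids at least $k = m - (2^d-1)^{k_1}(3\cdot 2^{d-1}-2)^{k_2}$ of them. Writing $2^d-1 = 2^d(1-\tfrac1{2^d})$ and $3\cdot 2^{d-1}-2 = 3\cdot 2^{d-1}(1-\tfrac1{3\cdot 2^{d-2}})$, this $k$ equals $m\bigl(1-(1-\tfrac1{2^d})^{k_1}(1-\tfrac1{3\cdot 2^{d-2}})^{k_2}\bigr)$, and it is a positive integer since $(k_1,k_2)\neq(0,0)$. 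In particular $M_d\bigl(k,\tfrac1m\bigr)\le m$ (indeed $N_d(k)\le m$).

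Finally I would feed this into Theorem~\ref{teo:puntos} with the above $k$ and $\alpha=\tfrac1m$: it gives $\bigl(\tfrac1{2m},\tfrac{k}{2M_d(k,1/m)}\bigr)\in\C_d$, whose second coordinate is at least $\tfrac{k}{2m}=\tfrac12\bigl(1-(1-\tfrac1{2^d})^{k_1}(1-\tfrac1{3\cdot 2^{d-2}})^{k_2}\bigr)$ because $M_d(k,\tfrac1m)\le m$. Since $\C_d$ is plainly a down-set (lowering $\alpha$ and $\beta$ only makes the defining existential easier), we may lower both coordinates to $\tfrac12(\tfrac1{2^d})^{k_1}(\tfrac1{3\cdot 2^{d-1}})^{k_2}$ and $\tfrac12\bigl(1-(1-\tfrac1{2^d})^{k_1}(1-\tfrac1{3\cdot 2^{d-2}})^{k_2}\bigr)$, which is exactly the claimed pair.

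The hard part — really the only part needing care — will be the recursion hypothesis: that $\mu|_{C_i}$ is of the kind the partition theorems accept, and that the refined cells $D\cap C_i$ are full-dimensional (so ``avoids'' is not vacuous). The latter holds because each carries positive $\mu$-measure, hence positive Lebesgue measure. For the former, if ``nice'' includes being compactly supported on a convex body with continuous density bounded away from $0$, restriction to a convex set preserves this verbatim; otherwise one invokes the Lehec form~\cite{Leh2009} of Yao--Yao, which only needs hyperplanes to have measure zero and so covers the absolutely continuous $\mu|_{C_i}$. Everything else is the bookkeeping of $m-m\rho$ above.
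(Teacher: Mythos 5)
Your proposal is correct and follows the same route the paper intends: iterate the composition inequality \eqref{eq:strong} starting from the Yao--Yao theorem and Theorem \ref{teo:y-y_avoids_2} to get $m=(2^d)^{k_1}(3\cdot 2^{d-1})^{k_2}$ equal parts with every hyperplane meeting at most $(2^d-1)^{k_1}(3\cdot 2^{d-1}-2)^{k_2}$ of them, then feed this into Theorem \ref{teo:puntos} (equivalently Lemma \ref{lem:parejas}). Your bookkeeping matches the stated pair exactly, and your attention to the niceness of the restricted measures is if anything more careful than the paper's.
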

This gives in particular that $\left(\frac{1}{2^{d+1}},\frac{1}{2^{d+1}}\right)\in\C_d$ and $\left(\frac{1}{3\cdot 2^{d}},\frac{1}{3\cdot 2^{d-1}}\right) \in \C_d$. The fact that $\left(\frac{1}{2^{d+1}},\frac{1}{2^{d+1}}\right)\in\C_d$ was obtained earlier in \cite{APP+2005} using a similar method. In Figure \ref{fig:ab} there are plots of these points together with the bound obtained in theorem \ref{teo:cotasup} in dimensions $2$ and $3$.

\begin{figure}
\centering
\mbox{\subfigure{\includegraphics{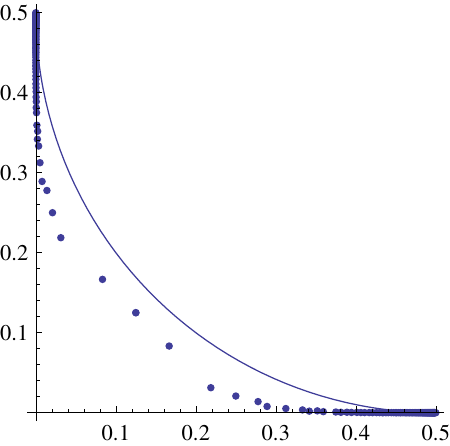}}
\subfigure{\includegraphics{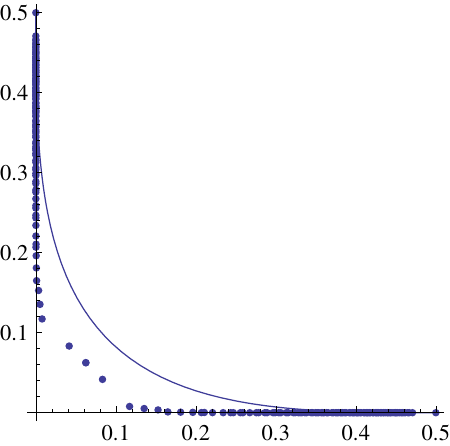}}}
\caption{Bounds for $\C_2$ and $\C_3$.}
\label{fig:ab}
\end{figure}

\begin{corollary}
There are pairs $(\alpha, \beta) \in \C_d$ arbitrarily close to $(0, \frac{1}{2})$.
\end{corollary}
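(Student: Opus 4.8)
The plan is to deduce this directly from Theorem~\ref{teo:puntos} and the asymptotic estimate of Theorem~\ref{teo:limit}; no new construction is needed. First I would invoke the first inclusion of Theorem~\ref{teo:puntos}: for every positive integer $k$ the point
$$p_k \;=\; \left(\frac{1}{2N_d(k)},\ \frac{k}{2N_d(k)}\right)$$
belongs to $\C_d$. Thus the corollary follows once we check that $p_k\to\left(0,\tfrac12\right)$ as $k\to\infty$.

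To see this, write the coordinates of $p_k$ as $\tfrac12\cdot\frac{1}{N_d(k)}$ and $\tfrac12\cdot\frac{k}{N_d(k)}$. By Theorem~\ref{teo:limit} we have $\frac{k}{N_d(k)}\to 1$; in particular $N_d(k)\to\infty$, so $\frac{1}{N_d(k)}\to 0$. Hence the first coordinate of $p_k$ tends to $0$ and the second to $\tfrac12$, giving $p_k\to\left(0,\tfrac12\right)$. Since $p_k\in\C_d$ for all $k$, this exhibits points of $\C_d$ arbitrarily close to $\left(0,\tfrac12\right)$. One could equally run the argument from the second inclusion of Theorem~\ref{teo:puntos}, taking $\alpha=1/N_d(k)$ and using $M_d\!\left(k,1/N_d(k)\right)\le N_d(k)$ together with the observation that $\C_d$ is closed downwards in each coordinate.

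The corollary carries no real difficulty of its own: all the substance sits in Theorem~\ref{teo:limit}, and specifically in its upper bound $N_d(k)\le k+o(k)$ proved in Section~\ref{seccion:otras} (the matching lower bound $N_d(k)\ge k$ being elementary). The only point needing a word of care is that the $p_k$ are well defined, i.e.\ that $N_d(k)<\infty$ for every $k$; this is guaranteed, for instance, by iterating the Yao-Yao theorem, or directly by the constructions of Section~\ref{seccion:otras}.
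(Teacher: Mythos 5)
Your proof is correct and follows exactly the route the paper intends: combine Theorem~\ref{teo:puntos} (giving $\left(\frac{1}{2N_d(k)},\frac{k}{2N_d(k)}\right)\in\C_d$) with Theorem~\ref{teo:limit} (giving $k/N_d(k)\to 1$, hence $N_d(k)\to\infty$). The paper offers only the one-line remark that the corollary ``comes from the fact that $\lim N_d(k)/k=1$,'' and your write-up is a faithful expansion of that same argument.
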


This last corollary comes from the fact that $\lim N_d(k)/k=1$.
However, as these pairs get close to $(0, \frac{1}{2})$ they are significantly worse than what theorem \ref{teo:cotasup} gives.

We can get better bounds if further conditions are imposed on one of the measures. Let $\C_d (\Delta)$ be the set of pairs $(\alpha, \beta)$ such that for any two measures $\mu_1$, $\mu_2$ on $S^d$ such that $\mu_1$ is the integral of a Lipschitz function $f$ with $\lip(f) \le \Delta$, there are sets $A, B$ in $S^d$ satisfying $\mu_1 (A) = \alpha$, $\mu_2 (B) = \beta$ and either $a \cdot b \ge 0$ for all $a \in A$, $b \in B$ or $a \cdot b \le 0$ for all $a \in B$, $b \in B$.

\begin{theorem}\label{teo:derivada}
For every $0<\lambda\leq 1$ and $0<r<\frac{1-\lambda}{\Delta}$ we have,
$$\left(\lambda h_d(r), h_{d-1} \left[\frac{\pi}{2}-2\arcsin\left(\frac{\sin(r)}{\sin(\frac{1-\lambda}{\Delta}-r)}\right)\right] \right)\in \C_d (\Delta).$$
\end{theorem}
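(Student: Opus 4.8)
The plan is to combine Lemma~\ref{lem:parejas} with a quantitative Lipschitz refinement of the Yao--Yao machinery. The idea is that when $\mu_1$ has bounded density, a single cap of the right size already captures a fixed fraction $\lambda$ of its mass, while the Lipschitz control lets us bound how much the ``forbidden'' directions can spread. Concretely, I would first fix a point $x_0\in S^d$ and consider the cap $A=\C(d,x_0,r)$. Because $\mu_1$ is the integral of $f$ with $\lip(f)\le\Delta$, and $\mu_1(S^d)=1$, the average value of $f$ is $1$; the Lipschitz bound forces $f\ge 1-\Delta\cdot\mathrm{diam}$ on small regions, so on a cap of angular radius $\rho=\frac{1-\lambda}{\Delta}$ the density stays above $\lambda$ (after normalising appropriately), giving $\mu_1(\C(d,x_0,\rho))\ge\lambda h_d(\rho)$. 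Restricting to the sub-cap of radius $r<\rho$ we get a set $A$ with $\mu_1(A)\ge\lambda h_d(r)$; this will be the $A$ of the conclusion.

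Next I would analyse $A^\perp$ for $A=\C(d,x_0,r)$. A direction $b\in S^d$ has $\{b\}^\perp\cap A\neq\emptyset$ precisely when the great hypersphere $\{b\}^\perp$ meets the cap of radius $r$ about $x_0$, i.e.\ when the angle between $b$ and $x_0$ lies in $[\frac{\pi}{2}-r,\frac{\pi}{2}+r]$. So $S^d\setminus A^\perp$ has two connected components, each a cap; by symmetry of the measure $\mu_2'$ we may pick $B$ to be a further shrinking of the larger component. The component is a cap about $x_0$ (or $-x_0$) of angular radius $\frac{\pi}{2}-r$, but we want a \emph{robust} sub-cap so that the argument survives the perturbations needed to make $x_0$ do double duty as the centre of mass of $\mu_1$. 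This is where the quantity $2\arcsin\!\left(\frac{\sin r}{\sin(\frac{1-\lambda}{\Delta}-r)}\right)$ enters: it measures, via elementary spherical trigonometry (the spherical law of sines applied to the triangle formed by $x_0$, the true centre, and a boundary point of the good cap), the maximal angular displacement between the nominal cap centre and any admissible centre once we only know $\mu_1$ of the $\rho$-cap is $\ge\lambda h_d(\rho)$. Subtracting this slack from $\frac{\pi}{2}-r$ and dropping a dimension (the great hypersphere $\{b\}^\perp$ intersected with the relevant sphere is an $S^{d-1}$, which is why $h_{d-1}$ rather than $h_d$ appears) yields a sub-cap $B$ with $\mu_2(B)\ge h_{d-1}\!\left[\frac{\pi}{2}-2\arcsin\!\left(\frac{\sin r}{\sin(\frac{1-\lambda}{\Delta}-r)}\right)\right]$, and by construction $a\cdot b\ge 0$ for all $a\in A$, $b\in B$ (or $\le 0$, on the antipodal component), which is exactly membership in $\C_d(\Delta)$.

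To organise this cleanly I would invoke Lemma~\ref{lem:parejas} with the one-element family $F=\{A\}$ and $\mu_F$ the point mass, so that $\rho_{\mathrm{lemma}}$ is $0$ or $1$ according to whether $b\in A^\perp$; but since we need $\mu_F(F_b)$ bounded by a \emph{constant} less than $1$ for \emph{all} $b$, and $F_b$ is everything when $b\in A^\perp$, the lemma in its stated form is slightly too blunt. The cleaner route is to bypass the lemma and argue directly as above, using the remark after Theorem~\ref{teo:cotasup} that the optimal $B$ for fixed $A$ is a connected component of $S^d\setminus A^\perp$, combined with the centrally-symmetric reduction already noted for $\mu_1,\mu_2$.

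The main obstacle I anticipate is the spherical-trigonometry estimate: one must show that, given only the lower bound $\mu_1(\C(d,x_0,\rho))\ge\lambda h_d(\rho)$, every candidate centre $c$ for the ``avoidance cap'' lies within angle $2\arcsin\!\left(\frac{\sin r}{\sin(\rho-r)}\right)$ of $x_0$, and that the worst case is attained by an extremal configuration where the mass of $\mu_1$ sits on the boundary of the $\rho$-cap. Getting the constant exactly right (and verifying the $\arcsin$ argument stays in $[-1,1]$, which forces the hypothesis $r<\frac{1-\lambda}{\Delta}$) requires care; everything else is bookkeeping with $h_d$ and the reduction to symmetric measures.
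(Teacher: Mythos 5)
Your opening step is fine and matches the paper: pick $x_0$ with $f(x_0)\ge 1$, use $\lip(f)\le\Delta$ to get $f\ge\lambda$ on the cap $\C(d,x_0,R)$ with $R=\frac{1-\lambda}{\Delta}$, so every $r$-cap contained in it has $\mu_1$-measure at least $\lambda h_d(r)$. The gap is in everything after that. Having fixed a \emph{single} cap $A=\C(d,x_0,r)$, you propose to take $B$ to be (a shrinking of) a component of $S^d\setminus A^\perp$ and to argue directly. But in the definition of $\C_d(\Delta)$ only $\mu_1$ carries the Lipschitz hypothesis; $\mu_2$ is completely arbitrary. For a fixed cap $A$ of angular radius $r$, the set $A^\perp$ is a band of angular half-width $r$ about the great sphere $\{x_0\}^\perp$, and the adversary can concentrate all of $\mu_2$ inside that band, so both components of $S^d\setminus A^\perp$ have $\mu_2$-measure $0$. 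No spherical-trigonometric analysis of ``admissible centres'' repairs this: the obstruction is not uncertainty about where the centre of $A$ sits, but that a single $A$ hands the adversary a single band to hide in. You correctly observe that Lemma~\ref{lem:parejas} applied to a one-element family is vacuous (one is forced to take $\rho=1$), but the right conclusion is not to bypass the lemma --- it is to enlarge the family so that the averaging in its proof has something to average over.

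That is exactly what the paper does. It takes $F$ to be the continuous family of caps $\C(d,x,r)$ with $x$ ranging over $\partial\C(d,x_0,R-r)$; all of these lie inside $\C(d,x_0,R)$, hence each has $\mu_1$-measure at least $\lambda h_d(r)$, and $\mu_F$ is the uniform probability measure on that $(d-1)$-sphere of centres. Since $\C(d,x,r)$ is the intersection of $S^d$ with a Euclidean ball of radius $\sin(r)$ about $x$, and the sphere of centres has Euclidean radius $\sin(R-r)$, a great sphere $\{b\}^\perp$ can only meet those caps whose centres lie in a band of the centre sphere; the two complementary caps of that band each have $\mu_F$-measure $h_{d-1}\left(\frac{\pi}{2}-2\arcsin\left(\frac{\sin r}{\sin(R-r)}\right)\right)$, so $\mu_F(F_b)\le 1-2h_{d-1}(\cdots)=\rho$ for every $b$, and Lemma~\ref{lem:parejas} yields precisely the claimed pair. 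This also corrects your reading of the $\arcsin$ term: it is the angular width, measured on the $(d-1)$-sphere of cap centres (which is why $h_{d-1}$ appears), of the set of caps a single hyperplane can touch --- not a bound on the displacement of a cap centre.
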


If $r$ is close to $0$, then the pairs obtained are close to
$$\left(\lambda h_d (r), h_{d-1}\left[ \frac{\pi}{2} - c_2r\right]\right)$$
for a constant $c_2$ depending on $\lambda$ and $\Delta$.
That is, the difference in dimension with respect to the bounds of theorem $\ref{teo:cotasup}$ is compensated by the constants.
The idea of the proof is to use a small $S^{d-1}$ in $S^d$ and the Lipschitz condition to construct sets as in the proof of lemma \ref{lem:parejas}.
If instead of this we use a hypercube (of dimension $d-1$) in $S^d$, we obtain bounds of the type $(\frac{c_1}{m^{d-1}}, \frac{1}{2} - \frac{c_2 d}{m})$.
These are worse than the ones in theorem \ref{teo:derivada} but are easier to grasp.

\section{Yao-Yao's original proof}\label{seccion:yaoyao}

We quickly go through the original proof by Yao and Yao since its spirit will be followed in the proof of our main theorem. This will also allow us to note some additional properties.

Let $O(d)$ be the space consisting of $d\times d$ matrices $u$ such that $u^Tu=I$, the space $SO(d)\subset O(d)$ consists of matrices with determinant $1$. A matrix $u\in O(d)$ can be expressed as $u=(u_1,\dots,u_d)$ where $u_i$ is the $i$-th row vector of $u$. In this way every $u$ can be identified with an ordered orthonormal base of $\R^d$.

Fix a base $u=(u_1,\dots,u_d)$ of $\R^d$.
If $H$ is a hyperplane orthogonal to $u_1$, define the open half-spaces
\begin{align*}
H^+=&\{x+t u_1:x\in H, t>0\},\\
H^-=&\{x-t u_1:x\in H, t>0\}.
\end{align*}

Let $v$ be a unit vector in $\R^d$ not orthogonal to $u_1$ and let
$p_v:\R^d\rightarrow H$ be the projection such that $p_v(x+t v)=x$ for all $x\in H$ and $t\in\R$.
We can identify $H$ with $\R^{d-1}$ by means of the base $u_2,\dots,u_d$.
There is a natural way to define measures $\mu^+_v$ and $\mu^-_v$ in $H$: For any measurable $S\subset H$, set $\mu^\pm(S)=\mu(p_v^{-1}(S)\cap H^\pm)$.

In \cite{YY1985} a centre $c\in\R^d$ for $\mu$ relative to the base $u_1,\dots,u_d$ is defined as follows:
\begin{itemize}
\item If $d=1$ then $c$ is the point that splits $\R$ into two parts of equal $\mu$-measure.
\item If $d>1$, let $H$ be the hyperplane orthogonal to $u_1$ that splits $\R^d$ into two parts of equal $\mu$-measure. Then $c$ lies on $H$ and there exists a unit vector $v$ (with $u_1\cdot v>0$) such that $c$ is a centre for both $\mu^+_v$ and $\mu^-_v$ relative to $u_2,\dots,u_d$.
\end{itemize}

This induces a partition into $2^d$ parts, if a hyperplane intersects the line through $c$ parallel to $v$ in $H^+$ then it avoids one of the elements of the partition contained in $H^-$ and vice versa.

It is then proved that $c$ is exists and is unique. It is easy to see that the projection vector $v$ is also unique and that $v$ and $c$ vary continuously with $u$.

If we want each element of the partition to have a pre-described value, then the same proof works by changing the choice of $H$ appropriately.

\section{Proof of theorem \ref{teo:y-y_avoids_2}}\label{seccion:teoremon}

Problems involving partitions of measures are topological in nature.
A common method to approach them is to parametrise a subset of the possible partitions by a space $X$ (called phase space), construct a space $Y$ (called target space) of parameters of a partition and relate them by a function $f:X\rightarrow Y$ (called test map).
Ideally, there will be a group acting on both $X$ and $Y$ such that $f$ is equivariant. The existence of the target partition is then reduced to showing that any equivariant function on those spaces always takes some value (e.g. \cite{Mat2003}, \cite{Sob2011}).
We follow this sketch and reduce the problem to showing that some equivariant functions always have a zero.

Given $x=(x_1,\dots,x_n)\in\R^{d_1}\times\dots\times\R^{d_n}$, we define $g_i(x)$ as the result of changing the sign of the $i$-th coordinate of $x$. We always use $g_i$ to denote this function independently of the target space, since it causes no confusion. We denote he $j$-th coordinate of $x_i\in\R^{d_i}$ by $x_i^{(j)}$.

We start with the geometrical part of the proof of theorem \ref{teo:y-y_avoids_2} and continue with the topological part.

\begin{proof}[Proof of theorem \ref{teo:y-y_avoids_2}]
Let $u=(u_1,\dots,u_d)$ be an orthonormal base of $\R^d$, we think of $u_1$ as the upwards direction. If $H$ is a hyperplane orthogonal to $u_1$, define the open half-spaces
\begin{align*}
H^+=&\{x+t u_1:x\in H, t>0\},\\
H^-=&\{x-t u_1:x\in H, t>0\}.
\end{align*}

Let $H_1$ and $H_2$ be the hyperplanes orthogonal to $u_1$ such that the sets $A=H_1^+$, $B=H_1^-\cap H_2^+$ and $C=H_2^-$ have equal $\mu$-measure. Let $\mu_1=\mu|{A\cup B}$ and $\mu_2=\mu|{B\cup C}$.

Yao-Yao's theorem applied to $\mu_1$ gives a unique centre $O_1\in H_1$ and a unique projection vector $v_1$ pointing downwards (i.e. $u_1\cdot v_1<0$).

Let $J_1\subset H_1$ be the $(d-2)$-dimensional flat through $O_1$ orthogonal to $u_2$. Note that the hyperplane $K_1=\{J_1+tv_1:t\in\R\}$ splits $B$ into two parts of equal $\mu_1$-measure.

Define analogously $O_2\in H_2$, $v_2$ pointing upwards, $J_2$ and $K_2$ (see Fig. \ref{fig:y-y_avoids_2}). Since $K_1$ and $K_2$ each divide $B$ into two parts of equal $\mu$-measure, they intersect in a $d-2$-dimensional flat $J\subset B$ parallel to $J_1$ and $J_2$.

\begin{figure}
\centering
\includegraphics{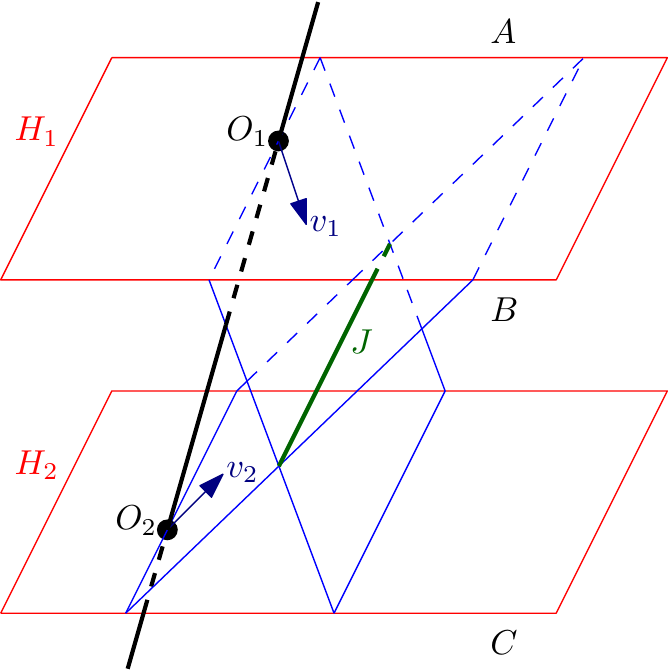}
\caption{The hyperplanes, centres and projection vectors.}
\label{fig:y-y_avoids_2}
\end{figure}
The centres $O_1$ and $O_2$ as well as the vectors $v_1$ and $v_2$ vary continuously with $u$.

Our aim is to find $u$ such that the vectors $v_1$ and $v_2$ are parallel to the line $O_1 O_2$.
Once this is done, let $\mathcal P_1$ and $\mathcal P_2$ be the corresponding Yao-Yao partition to $\mu_1$ and $\mu_2$.

Then we can use the partition $\mathcal P$ consisting of the elements of $\mathcal P_1$ contained in $A$, the elements of $\mathcal P_2$ contained in $C$ and the non-empty elements $K\cap B$ such that $K\in\mathcal P_1$. Every hyperplane avoids at least two elements of $\mathcal P$.  This is because if it hits the line $l$ in section $A$, it misses a section contained in $B$ and one contained in $C$, if it hits $l$ in $B$ it misses a section in $A$ and one in $C$ and if it hits $l$ in $C$ it misses a section in $A$ and one in $B$.

Now we search for the base $u$.

Let $r:\R^d\rightarrow \R^{d-1}$ be the projection such that $r(O_1)=r(O_2)=O$ and $\{r(u_2),\dots,r(u_d)\}$ is the canonical basis.

The affine hyperplane $r(J)$ is orthogonal to $u_2$, so it is of the form $\{v:v\cdot u_2=\lambda\}$ for some $\lambda\in\R$. Let $x\in\R^{d-2}$ and $y\in\R^{d-2}$ be the vectors consisting of the last $d-2$ coordinates of $r(v_1)$ and $r(v_2)$, respectively.

Let $h(u)=(x,y,\lambda)\in\R^{d-2}\times\R^{d-2}\times\R$, note that if $h(u)=0$ for some $u$, then the vectors $v_1$ and $v_2$ are parallel to the line $O_1O_2$ and we are done.

The map $h$ satisfies the following conditions:
\begin{itemize}
\item $h(g_1(u))=(y,x,\lambda)$,
\item $h(g_2(u))=(x,y,-\lambda)$,
\item $h(g_{i+2}(u))=(g_i(x),g_i(y),\lambda)$ for $i=1,\dots,d-3$.
\end{itemize}
Where for any $z$, $g_i(z)$ is the result of changing the sign on the $i$-th coordinate.

Let $f:O(d)\rightarrow\R^{d-1}\times\R^{d-2}\times\dots\times\R^1$ be defined by
$$f(u)=((x+y,\lambda),x-y,0,\dots,0).$$
Finding a zero of $h$ is equivalent to finding a zero of $f$. We will prove something more general, but first we need a definition. Let $v\in\R^{d-1}\times\dots\times\R^1$, then $v^{(j)}=(v_1^{(j)},\dots,v_{d-j}^{(j)})$ and $v^T=(v^{(1)},\dots,v^{(d-1)})$.

\noindent{\bf Claim.} {\it Assume $f:O(d)\rightarrow\R^{d-1}\times\dots\times\R^1$ is a function such that whenever $f(u)=v$, then
\begin{itemize}
\item $f(g_1(u))=g_2(v)$,
\item $f(g_2(u))=g_{d-1}(v^T)^T$,
\item $f(g_{i+2}(u))=g_i(v^T)^T$ for $i=1,\dots,d-3$.
\end{itemize}
Then there exists $u\in O(d)$ such that $f(u)=0$.}

We use a similar proof method to the one B\'ar\'any used to prove the Borsuk-Ulam theorem in \cite{Bar1980}.
This method is thoroughly explained in Chapter 2.2 of \cite{Mat2003} and in \cite{Mus2011}.
We leave the details omitted of the following proof to the interested reader.

Together with composition, we can think of $\{g_1,\dots g_d\}$ as a set of generators of the group $\Z_2^d$. Given the conditions on $f$, there are natural group actions of $\Z_2^d$ on $O(d)$ and $\R^{d-1}\times\dots\times\R^1$ such that $f$ is equivariant. However, the space $O(d)$ is too large for our needs, so instead we consider the restriction $f^*=f|SO(d)$ and the group actions of $\Z_2^{d-1}$ on $SO(d)$ and $\R^{d-1}\times\dots\times\R^1$ obtained by taking the group generated by $\{g_1\circ g_d,\dots,g_{d-1}\circ g_d\}$.

Let $f_0:SO(d)\rightarrow\R^{d-1}\times\dots,\R^1$ be the function given by
$$f_0(u)=(v_1,\dots,v_{d-1}),$$
where
\begin{itemize}
\item $v_1=(u_3^{(1)},\dots,u_d^{(1)},u_2^{(1)})$,
\item $v_2=u_1^{(1)}\cdot(u_3^{(2)},\dots,u_d^{(2)}),$
\item $v_{i+2}=(u_3^{(i+2)},\dots,u_{d-i}^{(i+2)})$ for $i=1,\dots,d-3$.
\end{itemize}
This function is continuous and equivariant. Furthermore, if $f_0(u)=0$ then it is not difficult to see that $u_i$ is the $i$-th element of the canonical basis or its negative. Therefore $f_0$ has exactly $2^{d-1}$ zeros in $SO(d)$.

Let $F:SO(d)\times I\rightarrow\R^{d-1}\times\dots\times\R^1$ be the equivariant homotopy given by $F(u,t)=tf^*(u)+(1-t)f_0(u)$ that takes $f_0$ to $f^*$.

Assume $f^*$ (and therefore $F$) is generic enough, then the set $F^{-1}(0)$ consists of paths and cycles. If $f^*$ has no zeros, then all the paths of $F^{-1}(0)$ have their endpoints at points of the form $(u,0)$ where $u$ is a zero of $f_0$. Therefore there must be a path connecting two such points, but this is impossible due to the group action.

Since almost all continuous functions $f$ are generic, this is valid for all continuous functions $f$.
\end{proof}

\section{Other Proofs}\label{seccion:otras}

\begin{proof}[Proof of theorem \ref{lem:inR2}]
Set a parameter $t\geq 0$, we proceed inductively. Let $\ell_1$ be a oriented halving line (i.e. a line that splits $\R^2$ into two parts of equal $\mu$-measure that has fixed right and left sides). Once that $\ell_k$ has been constructed, let $\ell_{k+1}$ be the oriented halving line such that the sets 
\begin{align*}
A_k=&\{x\in\R^2:x\text{ is right of }\ell_{k+1}\text{ and left of }\ell_k\}\\
A_{q+k}=&\{x\in\R^2:x\text{ is left of }\ell_{k+1}\text{ and right of }\ell_k\}\\
\end{align*}
have $\mu$-measure $\frac{p}{2q}+t$, for $k=1,\dots,q$.
If $t=0$ then the sum of the measures of these sets up to $k=q$ is $p$, but they may not cover $p$ times every point of $\R^2$.
There is a smallest $t$ such that each point of $\R^2$ is covered at least $p$ times.
In this case $\ell_1$ and $\ell_{q+1}$ are equal as sets, so in total we have $q$ lines and the boundary of every $A_k$ is contained in the union of two of them.
If $\ell\subset\R^2$ is a typical line then it intersects each of the lines $\ell_1,\dots,\ell_q$ once, therefore it intersects exactly $p+q$ elements of $F$.
\end{proof}

\begin{proof}[Proof of corollary \ref{coro:inR2}]
In the previous proof put $p=1$ and $q=k+1$, then the resulting sets form a partition.
\end{proof}

\begin{proof}[Proof of theorem \ref{teo:limit}]
Clearly $N_d(k)\geq d+k$, as there is a hyperplane through any given $d$ points. The function $N_d$ also satisfies
\begin{equation}\label{eq:weak}
N_d(k_1+k_2)\leq N_d(k_1)+N_d(k_2).
\end{equation}
To see this, partition $\R^d$ by a hyperplane that divides its measure in proportions $N_d (k_1) : N_d (k_2)$. We can find a $k_1$-equipartition of one side and a $k_2$-equipartition of the other side. We are left with $N_d(k_1)+N_d(k_2)$ parts of equal measure such that every hyperplane avoids $k_1+k_2$ of them.

We also have the asymptotically stronger equation
\begin{equation}\label{eq:strong}
N_d(k_1N_d(k_2)+k_2N_d(k_1)-k_1k_2)\leq N_d(k_1)N_d(k_2).
\end{equation}
This can be shown by finding a $k_1$-equipartition of $\R^d$ and further partition each of its pieces by $k_2$-equipartitions.
We are left with $N_d(k_1)N_d(k_2)$ parts of equal measure such that every hyperplane intersects at most $(N_d(k_1)-k_1)(N_d(k_2)-k_2)$ of them.

Starting with Yao-Yao's theorem and iterating \eqref{eq:strong}, a sequence of partitions can be found such that in the $i$-th step we have $2^{di}$ parts of equal measure and every hyperplane intersects at most $(2^d-1)^i$ of them.
Therefore, a sequence $k_i$ can be found in which $N_d(k_i)/k_i$ tends to $1$. Then \eqref{eq:weak} implies
$$\lim_{k\rightarrow\infty}\frac{N_d(k)}{k}=1.$$
\end{proof}

\begin{proof}[Proof of theorem \ref{teo:S1}]
Take $\alpha,\beta$ with $\alpha+\beta\leq\frac{1}{2}$.
Suppose that for every arc segment $A$ with $\mu_1(A)=\alpha$ we have that $\mu_2(A^\perp)>\alpha$. Note that each of the two components of $A^\perp$ is obtained by rotating $A$ by an angle of $\pm\frac{\pi}{2}$.
This implies that $\mu_1(S^1)<\mu_2(S^1)$, which is a contradiction.
Therefore there exists an arc segment $A$ that satisfies $\mu_2(A^\perp)\leq\alpha$.
If $B$ is any of the two components of $S^1\setminus A^\perp$, then $\mu_2(B)\geq\beta$.
This proves one of the inclusions, theorem \ref{teo:cotasup} gives us the other.
\end{proof}

\begin{proof}[Proof of theorem \ref{lem:parejas}]
Let $\mu_1$ and $\mu_2$ be nice measures. Suppose that we can find $\mu_F$ as above, then by Fubini's theorem
\begin{align*}
\int_F\mu_2(A^\perp)d\mu_F =& \int_F\int_{S^d}\chi(A^\perp)d\mu_2d\mu_F
= \int_{S^d}\int_F\chi(F_b)d\mu_Fd\mu_2\\
=& \int_{S^d}\mu_F(F_b)d\mu_2 \leq \rho.
\end{align*}
Thus, we can find $A_0$ such that $\mu_2 (A_0^\perp)\leq\rho$. This means that there is a set $B$ such that $\mu_2 (B) \ge \frac{1-\rho}{2}$ and the sign of $a\cdot b$ is constant for all $a\in A_0$ and $b\in B$.
\end{proof}

\begin{proof}[Proof of theorem \ref{teo:puntos}]
Given a hyperplane in $\R^{d+1}$ that does not go through $0$, we may identify $\R^d$ with $S^d$ by means of a radial projection.
This gives a measure $\mu^*$ in $\mathbb{R}^d$, so we can find $M_d(k,\alpha)$ convex sets of measure $\alpha$ such that every hyperplane (in $\mathbb{R}^d$) avoids the interior of at least $k$ of them.
If we bring this family of convex sets back to $S^d$, we obtain a new family $F$ of $2M_d(k, \alpha)$ convex sets in $S^d$, each of measure $\frac{\alpha}{2}$.
By choosing $\mu_F$ to be the discrete probability measure on $F$ and applying lemma \ref{lem:parejas}, we are done.
\end{proof}

\begin{proof}[Proof of theorem \ref{teo:derivada}]
We may assume that $\mu_1(S^d)=1$, then there must be a point $x_0\in S^d$ such that $f(x_0)\geq 1$.
Let $\lambda\leq 1$ and $R=\min(\frac{1-\lambda}{\Delta},\frac{\pi}{2})$, from the fact that $\lip(f)\leq\Delta$ it follows that $f\geq\lambda$ on $C(d,x_0,R)$.

Set $r\leq \frac{R}{4}$, we consider the family $F$ of caps $C(d,x,r)$ with $x$ in the boundary of $C(d,x_0,R-r)$. Each of these has measure at least $\lambda h_d(r)$.

Here we need some observations:
\begin{itemize}
\item $C(d,x,r)$ is the intersection of $S^d$ with a ball with centre $x$ and radius $\sin(r)$.
\item $\partial C(d,x_0,R-r)$ is a $(d-1)$-sphere with radius $\sin(R-r)$.
\end{itemize}
Let $\mu_F$ be the usual probability measure on $\partial C(d,x_0,R-r)$.
From this we can see that any hyperplane intersects a portion of $F$ with size at most
$$1-2h_{d-1} \left(\frac{\pi}{2}-2\arcsin\left(\frac{\sin(r)}{\sin(R-r)}\right)\right).$$

We conclude the proof in the same way as lemma \ref{lem:parejas}.
\end{proof}

\bibliographystyle{amsplain}
\bibliography{referencias}
\end{document}